\theoremstyle{plain}
\newtheorem{lem}{Lemma}
\newtheorem{prop}[lem]{Proposition}
\newtheorem{thm}[lem]{Theorem}
\newtheorem*{thm*}{Theorem}
\newtheorem{cor}[lem]{Corollary}
\newtheorem*{cor*}{Corollary}
\theoremstyle{definition}
\newtheorem{defn}[lem]{Definition}
\newtheorem*{defn*}{Definition}
\newtheorem*{ex*}{Example}
\newtheorem{rem}[lem]{Remark}
\newtheorem*{rem*}{Remark}
\theoremstyle{remark}
\newtheorem*{notat}{Notation}
\DeclareMathOperator{\id}{id}
\DeclareMathOperator{\diam}{diam}
\DeclareMathOperator{\dist}{dist}
\DeclareMathOperator{\graph}{graph}
\DeclareMathOperator{\card}{card}
\newcommand{\R}{\mathbb R}
\newcommand{\Z}{\mathbb Z}
\newcommand{\N}{\mathbb N}
\newcommand{\MM}{\mathcal M}
\newcommand{\HH}{\mathcal H}
\newcommand{\U}{\mathcal U}
\newcommand{\II}{\mathcal I}
\begin{document}

\title[On the dimension of graphs of Weierstrass-type functions]{On the dimension of graphs of Weierstrass-type functions with rapidly growing frequencies}

\date{\today}

\author{Krzysztof Bara\'nski}
\address{Institute of Mathematics, University of Warsaw,
ul.~Banacha~2, 02-097 Warszawa, Poland}
\email{baranski@mimuw.edu.pl}

\thanks{Research partially supported by Polish MNiSW Grant N N201 607940.}
\subjclass[2000]{Primary 28A80, 28A78.}

\begin{abstract} We determine the Hausdorff and box dimension of the fractal graphs for a general class of Weierstrass-type functions of the form $f(x) = \sum_{n=1}^\infty a_n \, g(b_n x + \theta_n)$, where $g$ is a periodic Lipschitz real function and $a_{n+1}/a_n \to 0$, $b_{n+1}/b_n \to \infty$ as $n \to \infty$. Moreover, for any $H, B \in [1, 2]$, $H \leq B$ we provide examples of such functions with $\dim_H(\graph f) = \underline{\dim}_B(\graph f) = H$, $\overline{\dim}_B(\graph f) = B$.
\end{abstract}

\maketitle
\section{Introduction}
In this paper we study the dimension of the graphs of real functions of the form 
\begin{equation}\label{eq:f}
f:\R \to \R, \qquad f(x) = \sum_{n=1}^\infty a_n \, g(b_n x + \theta_n),
\end{equation}
where $g: \R \to \R$ is a non-constant periodic Lipschitz function, $a_n, b_n > 0$ with $b_{n+1}/b_n \to \infty$ as $n\to \infty$ and $\theta_n \in \R$. The most well-known class of functions of the form \eqref{eq:f} is the one with $b_n = b^n$, $a_n = b^{-\alpha n}$ for $b > 1$, $\alpha \in (0, 1)$, including the famous Weierstrass example of a continuous nowhere differentiable function on the interval. In spite of many efforts (see e.g. \cite{He,Hu,L,MW,PU} and the references therein), the question of determining the Hausdorff dimension of the graphs of such functions is still (mostly) open. 

It turns out that the case $b_{n+1}/b_n \to \infty$ is easier to handle. 
Probably the first to study such functions were Besicovitch and Ursell in 1937 \cite{BU}, who considered the case when $g$ is the ``sawtooth'' function $\Lambda(x) = \dist(x, \Z)$, with $a_n = b_n^{-\alpha}$ for some $\alpha \in (0, 1)$ and $\theta_n = 0$. In that case, they showed that if the sequence $b_{n+1}/b_n$ increases to $\infty$ and $\log b_{n+1}/ \log b_n \to 1$ as $n \to \infty$, then the Hausdorff dimension of the graph of $f$ is equal to $2 - \alpha$. Moreover, if $b_n = b_1^{\beta^{n-1}}$ where $b_1 > 1$ (then $\log b_{n+1}/ \log b_n \to \beta$ as $n \to \infty$) and
\[
\beta = \frac{(1-\alpha)(2-H)}{\alpha(H-1)}
\]
for $1 < H < 2-\alpha$, then the graph of $f$ has Hausdorff dimension $H$. 

In 1995, Wingren showed in \cite{W} that the graph of the function
\[
f(x) = \sum_{n=1}^\infty 2^{-n} \, \Lambda(2^{2^n} x)
\]
has Hausdorff dimension equal to two (and so has any subset of the graph whose projection on the real axis has positive Lebesgue measure). In \cite{Li} Liu showed that any such subset of the graph of the function
\[
f(x) = \sum_{n=1}^\infty 2^{-n(n-1)-1} \, \Lambda(2^{n(n+1)+1} x)
\]
has Hausdorff dimension equal to one and infinite $1$-Hausdorff measure.

Recently, Carvalho \cite{C} showed that if $g$ is a periodic Lipschitz function, such that $g$ is increasing on some interval $I_1$ and decreasing on some interval $I_2$, with $|g(x) - g(y)| > \delta |x-y|$ for every $x, y$ in $I_1$ and every $x, y$ in $I_2$, for some constant $\delta > 0$, moreover $\log b_{n+1}/ \log b_n \to \beta$  and $- \log b_n/\log a_n \to \alpha$ as $n \to \infty$ for $\alpha \in (0, 1)$, $\beta > 1$, then 
\begin{align*}
\dim_H(\graph f) = \underline{\dim}_B(\graph f) &= 1 + \frac{1-\alpha}{1-\alpha + \alpha\beta}, \\ 
\overline{\dim}_B(\graph f) &= 2 -\alpha,
\end{align*}
for the function $f$ of the form \eqref{eq:f} (for arbitrary $\theta_n$). Here $\dim_H$, $\underline{\dim}_B$ and $\overline{\dim}_B$ denote respectively the Hausdorff, lower and upper box dimension (see Section~\ref{sec:prelim} for definitions).

In this paper we complete the above results, determining the Hausdorff and box dimension of the graphs of functions of the form \eqref{eq:f} in the general case $b_{n+1}/b_n \to\infty$, $a_{n+1}/a_n \to 0$. More precisely, we prove the following.

\begin{thm}\label{thm:main} Let $g: \R \to \R$ be a periodic Lipschitz function, such that $g$ is strictly monotone on some $($non-trivial$)$ interval $I \subset \R$ with 
$|g(x) - g(y)| > \delta|x-y|$ for every $x,y \in I$ and a constant $\delta > 0$. If $a_n, b_n >0$, $a_{n+1}/a_n \to 0$, $b_{n+1}/b_n \to \infty$ as $n \to \infty$ and $\theta_n \in \R$, then for functions $f$ of the form \eqref{eq:f} we have
\begin{align*}
\dim_H(\graph f) = \underline{\dim}_B(\graph f) &= 1 + \liminf_{n\to\infty} \frac{\log^+ d_n}{\log (b_{n+1}d_n/d_{n+1})}, \\ 
\overline{\dim}_B(\graph f) &= 1 + \limsup_{n\to\infty} \frac{\log^+ d_n}{\log b_n},
\end{align*}
where
\[
d_n = a_1b_1 + \cdots + a_nb_n.
\]
\end{thm}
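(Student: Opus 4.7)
The plan is to prove this in three movements: (1) upper bounds on both box dimensions via a direct oscillation estimate, (2) matching lower counting bounds via an argument exploiting the strict monotonicity of $g$ on $I$, and (3) the Hausdorff lower bound via the mass distribution principle applied to the Lebesgue-lift measure on the graph. The upper bound on $\dim_H$ is then automatic from $\dim_H \leq \underline{\dim}_B$.

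For step (1), the key estimate is that for any interval $J$ of length $\ell \leq 1/b_n$,
\[
\mathrm{osc}(f, J) \leq L\, d_n\, \ell + C a_{n+1},
\]
where $L = \mathrm{Lip}(g)$; the first term comes from the Lipschitz bound on the partial sum $f_n = \sum_{k\le n} a_k g(b_k \cdot + \theta_k)$, and the tail $\sum_{k > n} a_k$ is controlled by $C a_{n+1}$ since $a_{n+1}/a_n \to 0$. Counting $\delta$-squares needed to cover $\graph f$ over $[0,1]$ then yields $N(\delta) \leq C\delta^{-1}\max(1, d_n, a_{n+1}/\delta)$ for $\delta \in [1/b_{n+1}, 1/b_n]$. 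On each such interval the ratio $\log N(\delta)/\log(1/\delta)$ is U-shaped in $\log\delta$: its maximum is at $\delta = 1/b_n$, giving $1 + \log^+ d_n/\log b_n$, while its minimum sits at the transition scale $\delta_* = a_{n+1}/d_n$ (when $a_{n+1}b_{n+1} \geq d_n$) or at $\delta = 1/b_{n+1}$ (otherwise). In both subcases a short calculation using the identity $b_{n+1}d_n/d_{n+1} = b_{n+1}/(1 + a_{n+1}b_{n+1}/d_n)$ identifies the minimum with $1 + \log^+ d_n/\log(b_{n+1}d_n/d_{n+1})$ up to $o(1)$. Taking $\limsup$ and $\liminf$ of the ratio over $\delta \to 0$ delivers the upper bounds on $\overline{\dim}_B$ and $\underline{\dim}_B$.

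For step (2), I invoke the hypothesis that $g$ is strictly monotone with slope at least $\delta_0$ on $I$. On a positive fraction of sub-intervals $J$ of length proportional to $1/b_n$ placed where $b_n x + \theta_n$ lies deep inside $I$ modulo the period of $g$, the $n$-th summand contributes a slope of modulus at least $\delta_0 a_n b_n$ throughout $J$. Because $b_{n+1}/b_n \to \infty$, the higher-frequency terms have total variation only $\sim a_{n+1}$ on $J$ and cannot cancel the dominant slope coming from level $n$. Iterating this selection along the hierarchy $1/b_1, 1/b_2, \ldots$ produces a positive-Lebesgue-measure ``good'' set $E \subset [0,1]$ on which the oscillation of $f$ matches the upper bound up to constants at every relevant scale, yielding the matching lower bound $N(\delta) \geq c\delta^{-1}\max(1, d_n, a_{n+1}/\delta)$ and closing both box dimension formulas.

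For step (3), define $\mu = \pi_*(\Leb|_{[0,1]})$ with $\pi(x) = (x, f(x))$. By the mass distribution principle, to conclude $\dim_H(\graph f) \geq s$ for any $s$ strictly less than the target $\liminf$, it suffices to verify $\mu(B_r(\pi(x))) \leq C_s r^s$ for $x$ in a positive-measure subset and all small $r$. Since
\[
\mu(B_r(x, f(x))) \leq \Leb\{x' \in \R : |x' - x| \leq r, \, |f(x') - f(x)| \leq r\},
\]
the task reduces to a level-set estimate. On the good set $E$ from step (2), the hierarchical slope structure forces $f$, near $x$ at scale $1/b_n$, to be close to a piecewise monotone function of slope of modulus $\gtrsim d_n$ on blocks of length $\gtrsim 1/b_n$; this bounds the level-set measure by $O(r/d_n)$ near the critical scale $\delta_*$, yielding $\mu(B_r) \leq Cr^2/d_n \leq Cr^s$. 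The main obstacle is running the inductive slope-tracking quantitatively enough that the intersection of good sets across all levels $n$ retains positive Lebesgue measure, and that the level-set bound interpolates uniformly across all intermediate scales $r$ between the discrete critical values $\delta_{*,n}$ — this is the step where the interplay between the monotonicity constant $\delta_0$, the rapid growth $b_{n+1}/b_n \to \infty$, and the comparison $d_n$ vs.\ $a_{n+1}b_{n+1}$ must all be tracked simultaneously.
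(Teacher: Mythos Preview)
Steps (1) and (2) are on the right track and parallel the paper's oscillation estimates and nested-interval construction; the gap is in step (3). You propose the lifted Lebesgue measure $\mu=\pi_*(\Leb|_{[0,1]})$ and aim to verify $\mu(B_r(\pi(x)))\le C_s r^s$ for $x$ in the good set $E$, which you describe as the intersection over all levels $n$ of $\{x: b_n x+\theta_n\in I' \bmod 1\}$. But $g$ is continuous, periodic and non-constant, so necessarily $|I|<1$; since $b_{n+1}/b_n\to\infty$, the conditions at successive levels are asymptotically independent, and within each $n$-th level good interval the $(n+1)$-th level good set has relative density $|I'|$. Hence this intersection has Lebesgue measure at most a constant times $|I'|^n\to 0$. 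The ``main obstacle'' you flag --- keeping $\Leb(E)>0$ --- is not merely hard but impossible. And off $E$ the level-set bound genuinely fails: whenever $b_k x+\theta_k$ lands where $g$ is flat or has slope of the wrong sign, the partial sum $f_k$ can have arbitrarily small net slope near $x$ at scale $1/b_k$, so $\{x':|x'-x|\le r,\ |f(x')-f(x)|\le r\}$ can have measure comparable to $r$, and Frostman with the Lebesgue lift collapses to $\dim_H\ge 1$.

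The paper's remedy is to discard Lebesgue measure and build a Cantor probability measure $\mu$ \emph{on the null set} $\II=\bigcap_n\bigcup_j I_{n,j}$ itself: the mass of each $I_{n,j}$ is split equally among its children, giving $\mu(I_{n,j})<(q^n b_n)^{-1}$. For the lift $\nu=(\id,f)_*\mu$, estimating $\nu(Q_r)$ no longer reduces to a level-set \emph{measure} but to \emph{counting} how many intervals $I_{n,j}$ at each generation $n$ satisfy $\graph f|_{I_{n,j}}\cap Q_r\ne\emptyset$. Because $f$ is strictly monotone with slope at least $c_1 d_n$ on every $I_{n,j}$ (exactly your step (2) observation, used now on the Cantor pieces rather than on a set of positive measure), this count obeys a recursion of the form $\card\MM_{n+1}/\card\MM_n\le c_3\max(d_{n+1}/d_n,\ rb_{n+1}/d_n,\ 1)$, and telescoping it against $\mu(I_{n,j})$ delivers the required local-dimension bound. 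The steepness on the Cantor pieces does all the work; no positive-Lebesgue-measure good set is needed or available.
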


\begin{rem}\label{rem:cantor}
In fact, the proof shows that $\dim_H (\graph f) = \dim_H (\graph f|_\II)$ for some Cantor set $\II \subset \R$ of Lebesgue measure $0$.
\end{rem}

\begin{rem} The assumptions on the function $g$ are satisfied, if 
$g$ is a periodic Lipschitz function, which is non-constant and $C^1$
on some interval (e.g. if $g$ is a periodic non-constant $C^1$ function). Indeed, in this case there exists an interval $I$ with $g'|_I > \delta > 0$ or $g'|_I < - \delta < 0$. 
\end{rem}

\begin{rem}\label{rem:a_n}
The assertion on $\overline{\dim}_B(\graph f)$ holds under a weaker assumption: $a_{n+1}/a_n \to 0$ can be replaced by $a_{n+1} < \eta a_n$ for large $n$, where $\eta$ is a sufficiently small constant depending on $g$ $($not on the sequences $a_n, b_n, \theta_n)$.
\end{rem}

Theorem~{\rm \ref{thm:main}} implies immediately the following corollaries.

\begin{cor}\label{cor:a_n} Under the assumption of Theorem~{\rm \ref{thm:main}}, if additionally $a_{n+1}b_{n+1} \geq a_nb_n \geq 1$ for sufficiently large $n$, then
\begin{align*}
\dim_H(\graph f) = \underline{\dim}_B(\graph f) &= 1 + \liminf_{n\to\infty} \frac{\log (a_n b_n)}{\log (a_n b_n / a_{n+1})}, \\ 
\overline{\dim}_B(\graph f) &= 2 - \limsup_{n\to\infty} \frac{\log a_n}{\log b_n},
\end{align*}
\end{cor}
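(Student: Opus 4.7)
The plan is to deduce both identities in Corollary \ref{cor:a_n} directly from Theorem \ref{thm:main} by rewriting the quantities $d_n$ and $b_{n+1}d_n/d_{n+1}$ in terms of $(a_n)$ and $(b_n)$. Under the hypothesis $a_{n+1}b_{n+1}\geq a_nb_n\geq 1$, the sequence $(a_kb_k)$ is eventually non-decreasing and at least one, so that $d_n$ is controlled by its last term up to a factor of $n$:
\[
a_nb_n \;\leq\; d_n \;\leq\; n\,a_nb_n.
\]
This already gives $\log^+d_n=\log d_n=\log(a_nb_n)+O(\log n)$ for all large $n$, handling the numerators in both formulas of Theorem \ref{thm:main}.

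Next, set $r_n=a_{n+1}b_{n+1}/(a_nb_n)\geq 1$ and write $d_{n+1}=d_n+a_{n+1}b_{n+1}$. The sandwich above gives $r_n/n\leq a_{n+1}b_{n+1}/d_n\leq r_n$, and a short estimate of $\log(1+x)$ (splitting into the cases $r_n\geq n$ and $r_n<n$) shows $\log(d_{n+1}/d_n)=\log r_n+O(\log n)$. Consequently
\[
\log\!\frac{b_{n+1}d_n}{d_{n+1}} \;=\; \log b_{n+1}-\log r_n+O(\log n) \;=\; \log\!\frac{a_nb_n}{a_{n+1}}+O(\log n),
\]
which rewrites the denominator in the Hausdorff/lower box formula.

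To conclude, I would check that the $O(\log n)$ corrections are asymptotically negligible after division. The hypothesis $b_{n+1}/b_n\to\infty$ forces $\log b_n/n\to\infty$, while $a_{n+1}/a_n\to 0$ eventually implies $a_{n+1}\leq 2^{-(n-n_0)}a_{n_0}$, so $-\log a_{n+1}\gtrsim n$; hence $\log n=o(\log b_n)$ and $\log n=o(\log(a_nb_n/a_{n+1}))$. Substituting the two expansions into Theorem \ref{thm:main} and discarding the resulting $o(1)$ errors yields both identities of Corollary \ref{cor:a_n}, with the upper box formula coming from the elementary rewriting of $\log(a_nb_n)/\log b_n$ as $1+(\log a_n)/\log b_n$.

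The only delicate point I anticipate is the preservation of the $\liminf$ in the Hausdorff formula, since the numerator $\log(a_nb_n)\geq 0$ could a priori be comparable in size to the $O(\log n)$ perturbation. I would handle this by a direct ratio-perturbation estimate: if $(A_n+E_n)/(B_n+F_n)$ has $B_n\gtrsim n$ and $|E_n|,|F_n|=O(\log n)$, then
\[
\left|\frac{A_n+E_n}{B_n+F_n}-\frac{A_n}{B_n}\right| \;=\; \frac{|B_nE_n-A_nF_n|}{B_n(B_n+F_n)} \;=\; O\!\left(\frac{\log n}{n}\right) \;=\; o(1),
\]
using $0\leq A_n\leq B_n$, so that $\liminf$ is preserved under the perturbation.
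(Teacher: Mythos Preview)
Your proposal is correct and follows essentially the same route as the paper: the paper's proof also rests on the sandwich $a_nb_n < d_n \leq c\,n\,a_nb_n$, giving $\log^+ d_n = \log(a_nb_n) + O(\log n)$, and then appeals to $n/\log b_n \to 0$ (equation~\eqref{eq:n/b_n}) to discard the $O(\log n)$ corrections in both the numerator and denominator of the formulas in Theorem~\ref{thm:main}. Your write-up is simply more explicit where the paper says ``which easily implies the assertion'': you spell out the computation of $\log(b_{n+1}d_n/d_{n+1})$, the lower bound $\log(a_nb_n/a_{n+1}) \geq -\log a_{n+1} \gtrsim n$, and the ratio-perturbation estimate ensuring the $\liminf$ is preserved, all of which the paper leaves to the reader.
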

\begin{proof} In this case $d_n > 1$ and $a_nb_n < d_n \leq c n a_nb_n$ for some constant $c > 0$, so
\[
0 < \log^+ d_n - \log (a_nb_n)  < \log n + \log c, 
\]
which easily implies the assertion (see \eqref{eq:n/b_n}).
\end{proof}

\begin{cor}\label{cor:b_n} Under the assumption of Theorem~{\rm \ref{thm:main}}, if additionally $\log b_{n+1}/\log b_n \to 1$ as $n\to\infty$, then 
\[
\dim_H(\graph f) = \underline{\dim}_B(\graph f) = 1 + \liminf_{n\to\infty} \frac{\log^+ d_n}{\log b_n}.
\]
In this case $\dim_B(\graph f)$ exists if and only if there exists the limit $\displaystyle \gamma = \lim_{n\to\infty}\log^+ d_n/\log b_n$ and then 
\[
\dim_H(\graph f) = \dim_B(\graph f) = 1 + \gamma.
\]
\end{cor}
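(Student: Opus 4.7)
The plan is to derive the corollary directly from Theorem~\ref{thm:main} by showing that, under the extra hypothesis $\log b_{n+1}/\log b_n \to 1$, the denominator $\log(b_{n+1}d_n/d_{n+1})$ appearing in the formula for $\dim_H(\graph f) = \underline{\dim}_B(\graph f)$ is essentially $\log b_n$. The key observation I would establish is the sandwich
\[
\log b_n \;\leq\; \log(b_{n+1}d_n/d_{n+1}) \;\leq\; \log b_{n+1}
\]
for all sufficiently large $n$. The upper bound is immediate from $d_n \leq d_{n+1}$. For the lower bound, rewriting $b_{n+1}d_n/d_{n+1} \geq b_n$ as $d_n(b_{n+1}-b_n) \geq a_{n+1}b_nb_{n+1}$ and using $d_n \geq a_nb_n$ reduces it to
\[
\frac{a_n}{a_{n+1}} \cdot \frac{b_{n+1}-b_n}{b_{n+1}} \;\geq\; 1,
\]
which holds eventually since $a_n/a_{n+1} \to \infty$ and $(b_{n+1}-b_n)/b_{n+1} = 1 - b_n/b_{n+1} \to 1$, both consequences of the hypotheses of Theorem~\ref{thm:main}.

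Given the sandwich, dividing $\log^+ d_n \geq 0$ by each side yields
\[
\frac{\log^+ d_n}{\log b_{n+1}} \;\leq\; \frac{\log^+ d_n}{\log(b_{n+1}d_n/d_{n+1})} \;\leq\; \frac{\log^+ d_n}{\log b_n}.
\]
Since $\log b_n \to \infty$ (as $b_{n+1}/b_n \to \infty$) and $\log b_n/\log b_{n+1} \to 1$, the left and right members are related by a factor tending to $1$; a routine squeezing argument, handling the cases $\liminf\log^+ d_n/\log b_n$ finite or infinite separately, shows that they share the same $\liminf$. By the sandwich, so does the middle expression, which is the first claim of the corollary.

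For the second part, the formula just established gives $\underline{\dim}_B(\graph f) = 1 + \liminf\log^+ d_n/\log b_n$, while Theorem~\ref{thm:main} supplies $\overline{\dim}_B(\graph f) = 1+\limsup\log^+ d_n/\log b_n$. Hence $\dim_B(\graph f)$ exists precisely when these $\liminf$ and $\limsup$ coincide, i.e.\ when $\gamma = \lim_{n\to\infty}\log^+ d_n/\log b_n$ exists, in which case $\dim_H(\graph f) = \dim_B(\graph f) = 1+\gamma$. No serious obstacle is expected; the only delicate step is the lower bound of the sandwich, and it uses both growth hypotheses $a_{n+1}/a_n \to 0$ and $b_{n+1}/b_n\to\infty$ of the theorem in an essential way.
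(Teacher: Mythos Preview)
Your proposal is correct and follows essentially the same route as the paper. Both arguments reduce to showing that $\log(b_{n+1}d_n/d_{n+1})$ is asymptotically equivalent to $\log b_n$; the paper does this by bounding $\log(d_{n+1}/d_n) < \log b_{n+1} - \log b_n + \log 2$ (using only $a_{n+1}<a_n$ and $d_n\geq a_nb_n$) and then invoking $\log b_{n+1}/\log b_n\to 1$, which yields the slightly weaker sandwich $\log b_n - \log 2 < \log(b_{n+1}d_n/d_{n+1}) \leq \log b_{n+1}$, while you obtain the cleaner bound $\log b_n \leq \log(b_{n+1}d_n/d_{n+1})$ at the cost of invoking the full strength of $a_{n+1}/a_n\to 0$---a cosmetic difference only.
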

\begin{proof}
It is sufficient to show
\[
\lim_{n\to\infty} \frac{\log (b_{n+1}d_n / d_{n+1})}{\log b_n} = 1,
\]
which is equivalent to 
\begin{equation}\label{eq:to0}
\lim_{n\to\infty} \frac{\log (d_{n+1} / d_n)}{\log b_n} = 0.
\end{equation}
Since $a_{n+1}/a_n \to 0$, $b_{n+1}/b_n \to \infty$ as $n \to \infty$, we have $a_{n+1}<a_n$ and $b_{n+1}>b_n$ for large $n$, so
\[
0 < \log \frac{d_{n+1}}{d_n} = \log\left(1 + \frac{a_{n+1}b_{n+1}}{a_1b_1+\cdots + a_nb_n}\right) <  \log\left(1 + \frac{b_{n+1}}{b_n}\right) < \log b_{n+1} -\log b_n + \log 2,
\]
which gives \eqref{eq:to0}.
\end{proof}

\begin{cor}\label{cor:alpha}
Let $g$ be like in Theorem~{\rm \ref{thm:main}} and let
\[
f(x) = \sum_{n=1}^\infty b_n^{-\alpha} \, g(b_n x + \theta_n),
\]
where $\alpha \in (0, 1]$, $b_n > 0$, $\theta_n \in \R$, and $b_{n+1}/b_n \to \infty$ as $n \to \infty$. Then 
\begin{align*}
\dim_H(\graph f) = \underline{\dim}_B(\graph f) &= 1 + \frac{1-\alpha}{\displaystyle 1 -\alpha + \alpha \limsup_{n\to\infty} \log b_{n+1}/\log b_n},\\
\overline{\dim}_B(\graph f) &= 2 - \alpha.
\end{align*}
$($This includes the case $\limsup_{n\to\infty} \log{b_{n+1}/\log b_n}=\infty$ with the convention $1/\infty = 0$.$)$
In particular, if $\log b_{n+1}/\log b_n \to 1$ as $n\to\infty$, then
\[
\dim_H(\graph f) = \dim_B(\graph f)= 2 - \alpha.
\]
\end{cor}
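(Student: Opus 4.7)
The plan is to apply Theorem~\ref{thm:main} directly, after computing the asymptotics of $d_n = \sum_{k=1}^n a_k b_k = \sum_{k=1}^n b_k^{1-\alpha}$. The key observation is that the behavior of $d_n$ splits into two regimes according to whether $\alpha < 1$ or $\alpha = 1$. In the first regime the sum is dominated by its last term, while in the second each summand equals $1$. I expect the main bookkeeping challenge to be translating a liminf involving the ratio $\log^+ d_n/\log(b_{n+1}d_n/d_{n+1})$ into a limsup over $\log b_{n+1}/\log b_n$, via monotonicity.

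For the case $\alpha \in (0,1)$: since $b_{n+1}/b_n \to \infty$ and $1-\alpha > 0$, the ratio $b_{n+1}^{1-\alpha}/b_n^{1-\alpha}$ tends to $\infty$, so I would argue that $d_n/b_n^{1-\alpha} \to 1$, hence $\log^+ d_n = (1-\alpha)\log b_n + o(\log b_n)$. Substituting into the upper box dimension formula immediately gives $\overline{\dim}_B(\graph f) = 1 + (1-\alpha) = 2-\alpha$. For the Hausdorff dimension, the same asymptotics give $d_{n+1} \sim b_{n+1}^{1-\alpha}$, so
\[
\log\!\left(\frac{b_{n+1}d_n}{d_{n+1}}\right) = \log b_{n+1} + (1-\alpha)\log b_n - (1-\alpha)\log b_{n+1} + o(\log b_n)= \alpha \log b_{n+1} + (1-\alpha)\log b_n + o(\log b_n).
\]
Dividing numerator and denominator by $\log b_n$ and writing $r_n = \log b_{n+1}/\log b_n$, the ratio in the theorem becomes $(1-\alpha)/(1-\alpha+\alpha r_n) + o(1)$. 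Since this expression is strictly decreasing in $r_n$, its liminf is attained at $\limsup r_n$, giving the claimed formula for $\dim_H$.

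For the boundary case $\alpha = 1$: here $a_n b_n = 1$ identically, so $d_n = n$. Because $b_{n+1}/b_n \to \infty$, one has $\log b_n \to \infty$, and in fact $\log n/\log b_n \to 0$ since $b_n$ grows super-geometrically. Thus $\overline{\dim}_B = 1 + 0 = 1 = 2-\alpha$, and similarly $\log n/\log(b_{n+1}n/(n+1)) \to 0$ yields $\dim_H = 1$, consistent with the displayed formula under the convention $1/\infty = 0$ (noting that $r_n \geq 1$ for large $n$, so $\limsup r_n \geq 1$, and when $\alpha = 1$ the numerator $1-\alpha$ is $0$).

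The final assertion is then immediate: if $\log b_{n+1}/\log b_n \to 1$, then $\limsup r_n = 1$, so the Hausdorff-dimension formula collapses to $1 + (1-\alpha)/1 = 2-\alpha$, matching the upper box dimension and forcing $\dim_B(\graph f)$ to exist and equal $2-\alpha$. The only subtle point throughout is justifying $d_n \sim b_n^{1-\alpha}$ rigorously enough that the $o(\log b_n)$ errors in numerator and denominator do not affect the limsup/liminf, which follows from the tail bound $d_n - b_n^{1-\alpha} \leq b_n^{1-\alpha}\sum_{k\geq 1}(b_{n-k}/b_n)^{1-\alpha}$ together with $b_{k+1}/b_k \to \infty$.
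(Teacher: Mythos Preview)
Your argument is correct and follows essentially the same route as the paper: set $a_n=b_n^{-\alpha}$, observe that $d_n=\sum_{k\le n}b_k^{1-\alpha}$ is comparable to $b_n^{1-\alpha}$ (the paper uses the crude two-sided bound $b_n^{1-\alpha}<d_n<2b_n^{1-\alpha}$ rather than your sharper $d_n\sim b_n^{1-\alpha}$), plug this into the two formulas of Theorem~\ref{thm:main}, and convert the $\liminf$ to a $\limsup$ via monotonicity of $t\mapsto(1-\alpha)/(1-\alpha+\alpha t)$. Your separate treatment of $\alpha=1$ is in fact slightly more careful than the paper's, whose inequality $d_n<2b_n^{1-\alpha}$ is literally false at $\alpha=1$ (there $d_n=n$), although the conclusion $\log^+ d_n/\log b_n\to 1-\alpha$ remains valid by your $\log n/\log b_n\to 0$ argument.
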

\begin{proof} Let $a_n = b_n^{-\alpha}$. Then $d_n = a_1b_1 + \cdots + a_nb_n = b_1^{1-\alpha}+\cdots + b_n^{1-\alpha}$. 
Since $b_{n+1}/b_n \to \infty$, we have
\[
b_n^{1-\alpha} < d_n < 2 b_n^{1-\alpha}
\]
for large $n$, which gives
\[
\lim_{n\to\infty} \frac{\log^+ d_n}{\log b_n} = 1 -\alpha
\]
and
\begin{align*}
\liminf_{n\to\infty} \frac{\log^+ d_n}{\log (b_{n+1} d_n / d_{n+1})} &= \liminf_{n\to\infty}\frac{(1-\alpha)\log b_n}{\alpha\log b_{n+1} +(1-\alpha)\log b_n}\\ &= \frac{1-\alpha}{\displaystyle 1 -\alpha + \alpha \limsup_{n\to\infty} \log b_{n+1}/\log b_n}.
\end{align*}
\end{proof}

It is obvious that $\dim_H(\graph f), \overline{\dim}_B(\graph f) \in [1,2]$ and $\dim_H(\graph f)\leq \overline{\dim}_B(\graph f)$ (see Section~\ref{sec:prelim}). Using Theorem~{\rm \ref{thm:main}}, we can provide examples of a function $f$ of the form $\eqref{eq:f}$ with any prescribed values of $\dim_H(\graph f)$ and $\overline{\dim}_B(\graph f)$.

\begin{cor}\label{cor:ex} For every $H, B \in [1, 2]$, such that $H \leq B$ there exists a function $f$ fulfilling the assumptions of Theorem~{\rm\ref{thm:main}}, such that 
\[
\dim_H(\graph f) = \underline{\dim}_B(\graph f) =H, \qquad 
\overline{\dim}_B(\graph f) = B.
\]
\end{cor}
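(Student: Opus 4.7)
The plan is to apply Theorem~\ref{thm:main} to explicit sequences $(a_n),(b_n)$ tailored to each pair $(H,B)$. I fix $g(x)=\dist(x,\Z)$ and $\theta_n=0$; this $g$ is $1$-periodic, $1$-Lipschitz and strictly monotone on $[0,1/2]$ with $|g(x)-g(y)|=|x-y|$, so the hypotheses of Theorem~\ref{thm:main} are met. With $d_n=\sum_{k=1}^n a_kb_k$, I must arrange
\[
\limsup_n \frac{\log^+ d_n}{\log b_n}=B-1,\qquad \liminf_n \frac{\log^+ d_n}{\log(b_{n+1}d_n/d_{n+1})}=H-1.
\]

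For $B\in(1,2)$ and any $H\in[1,B]$ I apply Corollary~\ref{cor:alpha}. Taking $a_n=b_n^{-\alpha}$ with $\alpha=2-B$ instantly yields $\overline{\dim}_B(\graph f)=B$, and reduces the remaining task to choosing $b_n$ so that $L:=\limsup_n\log b_{n+1}/\log b_n$ equals $(B-1)(2-H)/((2-B)(H-1))$ (read as $\infty$ if $H=1$, as $1$ if $H=B$). A short algebraic check gives $L\geq 1$ precisely because $H\leq B$. For $L>1$ take $b_n=2^{L^n}$, which has $\log b_{n+1}/\log b_n\equiv L$ and $b_{n+1}/b_n=2^{L^n(L-1)}\to\infty$; for $L=\infty$ take $b_n=2^{n!}$; for $L=1$ take any $b_n$ with $\log b_{n+1}/\log b_n\to 1$ and $b_{n+1}/b_n\to\infty$, e.g.\ $b_n=\exp(2^{\sqrt n})$. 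The extremal $B=1$ forces $H=1$ and is handled by, say, $b_n=2^n$, $a_n=2^{-n^2}$, which make $d_n<1$ so $\log^+ d_n\equiv 0$ and both liminf and limsup vanish.

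The subtle case is $B=2$, where $\alpha=0$ conflicts with $a_{n+1}/a_n\to 0$ and Corollary~\ref{cor:alpha} does not apply. I work directly with Theorem~\ref{thm:main}, writing $c_n=\log b_n$ and $a_n=b_n^{-\alpha_n}$ with $\alpha_n\to 0$, and take $c_n$ super-geometric, say $c_n=n!$. Then $(1-\alpha_{n+1})c_{n+1}-(1-\alpha_n)c_n\to\infty$, so $d_n\sim a_nb_n$ and asymptotically
\[
\frac{\log^+ d_n}{\log b_n}\sim 1-\alpha_n,\qquad \frac{\log^+ d_n}{\log(b_{n+1}d_n/d_{n+1})}\sim \frac{(1-\alpha_n)c_n}{(1-\alpha_n)c_n+\alpha_{n+1}c_{n+1}}.
\]
Setting $\mu:=(2-H)/(H-1)\in[0,\infty]$ and declaring $\alpha_{n+1}:=\mu c_n/c_{n+1}$ for finite positive $\mu$ (respectively $\sqrt{c_n/c_{n+1}}$ for $\mu=\infty$, and $c_{n-1}/c_{n+1}$ for $\mu=0$), one checks $\alpha_n\to 0$, $\alpha_{n+1}c_{n+1}/c_n\to\mu$, and $\log(a_{n+1}/a_n)=\alpha_nc_n-\alpha_{n+1}c_{n+1}\to-\infty$, i.e.\ $a_{n+1}/a_n\to 0$. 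Substituting into the asymptotics yields $\overline{\dim}_B(\graph f)=2$ and $\dim_H(\graph f)=1+1/(1+\mu)=H$.

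The main obstacle is precisely this $B=2$ regime: the strong decay $a_{n+1}/a_n\to 0$ forces $\alpha_n\to 0$, yet the Hausdorff-dimension formula must still produce an arbitrary value in $[1,2]$. The resolution is that the single ratio $\alpha_{n+1}c_{n+1}/c_n$ can be tuned to any prescribed $\mu\in[0,\infty]$ while $\alpha_n$ still tends to zero, provided $c_n$ grows sufficiently fast; the parametrization above makes this transparent. For $B<2$ everything reduces cleanly to Corollary~\ref{cor:alpha}, and the remaining verifications are straightforward asymptotic bookkeeping.
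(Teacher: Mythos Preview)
Your overall strategy mirrors the paper's: for $B<2$ you invoke Corollary~\ref{cor:alpha} with $\alpha=2-B$ and tune $\limsup_n\log b_{n+1}/\log b_n$, and for $B=2$ you go back to the main theorem with $\alpha_n\to 0$. The paper does exactly the first, and for $B=2$ it instead quotes Corollary~\ref{cor:a_n} on three explicit sequences (with $b_n=2^{n^n}$ or $b_n=n^n$ and suitably chosen $a_n$). Your parametrisation via $\mu=(2-H)/(H-1)$ and $c_n=n!$ is a clean, unified way to hit every $H\in[1,2]$ at once; the paper's ad hoc triples require separate checks but avoid any asymptotic bookkeeping. Both routes are valid.

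There is, however, a genuine slip in your $B=1$ case: the choice $b_n=2^n$ gives $b_{n+1}/b_n=2$, which does \emph{not} tend to infinity, so Theorem~\ref{thm:main} does not apply. (Your claim $d_n<1$ is also off, since $a_1b_1=1$, but that is harmless: $\log^+ d_n$ stays bounded regardless.) The fix is trivial --- e.g.\ $b_n=n^n$, $a_n=n^{-n}$ already works via Corollary~\ref{cor:alpha} with $\alpha=1$, and indeed you need not treat $B=1$ separately at all, since $\alpha\in(0,1]$ is permitted there and your $B\in(1,2)$ argument extends verbatim to $B=1$ (then $L$ is irrelevant: any admissible $(b_n)$ gives $\dim_H=\overline{\dim}_B=1$).
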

\begin{proof} A large part of examples is not new --- we present them for completeness. The function $g$ can be taken to be e.g. $\sin$, $\cos$, $\dist(\cdot, \Z)$ etc.

For $H = B \in [1, 2)$, it is enough to take 
\[
f(x) = \sum_{n=1}^\infty n^{-(2-B) n} g (n^nx),
\]
and use Corollary~\ref{cor:alpha} for $\alpha = 2-B$, $b_n = n^n$. Similarly, for
$H \in (1,2)$, $B \in (H,2)$ we take 
\[
f(x) = \sum_{n=1}^\infty 2^{-(2-B)\beta^n} g (2^{\beta^n}x), 
\]
where
\[
\beta = \frac{(2-H)(B-1)}{(H-1)(2-B)}
\]
and use Corollary~\ref{cor:alpha} for $\alpha = 2-B$, $b_n = 2^{\beta^n}$. For $H = 1$, $B \in (1, 2)$ we take 
\[
f(x) = \sum_{n=1}^\infty 2^{-(2-B)n^n} g (2^{n^n}x)
\]
and use Corollary~\ref{cor:alpha} for $\alpha = 2-B$, $b_n = 2^{n^n}$. 

For $H = 1$, $B = 2$, take
\[
f(x) = \sum_{n=1}^\infty 2^{-n^n/\sqrt{n}} g (2^{n^n}x),
\]
for $H \in (1, 2)$, $B = 2$, 
\[
f(x) = \sum_{n=1}^\infty 2^{-((2-H)/(e(H-1)))n^{n-1}} g (2^{n^n}x),
\]
and for $H = B = 2$, 
\[
f(x) = \sum_{n=1}^\infty n^{-\sqrt{n}} g (n^nx).
\]
In all three cases, we have $a_{n+1}b_{n+1}/(a_nb_n) \to \infty$ as $n\to\infty$, so we can use Corollary~\ref{cor:a_n}. Easy details are left to the reader.
\end{proof}

The plan of the paper is as follows. After preliminaries, in Section~\ref{sec:lem}, we prove some useful lemmas. The estimates for the box dimension are done in Section~\ref{sec:BD} (Propositions~\ref{prop:upperBD}--\ref{prop:lowerBD}) and the Hausdorff dimension is estimated in Section~\ref{sec:HD} (Proposition~\ref{prop:HD} and Corollary~\ref{cor:HD}). The proof follows the construction used by Mauldin and Williams in \cite{MW}.

\section{Preliminaries} \label{sec:prelim}
For convenience, we recall the definitions of the Hausdorff and box dimension. For details, see e.g. \cite{F,M}.

\begin{defn} For $A \subset \R^n$ and $s > 0$ the (outer)
$s$-Hausdorff measure of $A$ is defined as 
\[
\HH^s (A) = \lim_{r \to 0^+} \inf \sum_{U \in \U} (\diam U)^s,
\]
where infimum is taken over all countable coverings $\U$ of $A$ by
open sets of diameters smaller than $r$. The Hausdorff dimension of $A$ is defined as
\[
\dim_H (A) = \sup \{s > 0 : \HH^s (A) = + \infty\} = \inf
\{s > 0 : \HH^s (A) = 0\}.
\]
\end{defn}

\begin{defn} For a bounded set $A \subset \R^n$ and $r > 0$ let $N(r)$ be the minimal number of balls of diameter $r$ needed to cover $A$. The lower and upper box
dimension (also called the box-counting or Minkowski dimension) are defined respectively as
\[
\underline{\dim}_B (A) = \liminf_{r \to 0^+}
\frac{\log N(r)}{-\log r}, \qquad 
\overline{\dim}_B (A) = \limsup_{r \to 0^+}
\frac{\log N(r)}{-\log r}.
\]
The lower and upper box dimension dimension of an arbitrary set $A \subset \R^n$ are defined as 
\[
\underline{\dim}_B (A) = \sup_{X \text{ bounded}} \underline{\dim}_B(A \cap X), \qquad \overline{\dim}_B (A) = \sup_{X \text{ bounded}} \overline{\dim}_B(A \cap X).
\]
If $\underline{\dim}_B (A) = \overline{\dim}_B (A)$, then the common value is called the box dimension of $A$ and is denoted by $\dim_B(A)$. 
\end{defn}

We have
\[
\dim_H(A) \leq \underline{\dim}_B (A).
\]

The definitions of the Hausdorff and box dimension easily imply

\begin{lem} \label{lem:holder} Let $f: [a, b] \to \R$ for $a, b \in\R$, $a<b$. Then
$\dim_H(\graph f) \geq 1$. Moreover, if $f$ is is H\"older continuous with exponent $\alpha \in (0,1]$, i.e. 
\[
\| f(x) - f(y) \| \leq c \| x - y \|^\alpha
\]
for every $x, y \in A$ and some constant $c > 0$, then
\[
\overline{\dim}_B(\graph f) \leq 2 - \alpha.
\]
In particular, if $\alpha = 1$, i.e. $f$ is Lipschitz continuous, then 
\[
\dim_H(\graph f) = \dim_B(\graph f) = 1.
\]
\end{lem}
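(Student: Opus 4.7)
My plan is to handle the three assertions of the lemma separately, using only the basic geometry of graphs together with the definitions recalled above. For the inequality $\dim_H(\graph f) \geq 1$, I would observe that the coordinate projection $\pi\colon \graph f \to [a,b]$ defined by $\pi(x, f(x)) = x$ is a bijection satisfying $|\pi(p) - \pi(q)| \leq \|p - q\|$ for all $p, q \in \graph f$, hence it is $1$-Lipschitz. Since Lipschitz maps do not increase Hausdorff dimension, this yields $\dim_H(\graph f) \geq \dim_H([a,b]) = 1$.

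For the upper bound on $\overline{\dim}_B(\graph f)$ under $\alpha$-H\"older continuity, the plan is a direct box-covering argument. Given small $r > 0$, I would partition $[a,b]$ into $N_r \leq \lceil (b-a)/r\rceil$ closed subintervals of length at most $r$. On each such subinterval $J$, the H\"older condition gives $\diam f(J) \leq c\, r^\alpha$, so the portion of $\graph f$ above $J$ lies in an axis-aligned rectangle of width $r$ and height $c\, r^\alpha$. This rectangle can be covered by at most $\lceil c\, r^{\alpha-1}\rceil + 1$ closed squares of side $r$, each contained in a ball of diameter $r\sqrt{2}$. Summing over the $N_r$ subintervals yields
\[
N(r\sqrt{2}) \leq C\, r^{\alpha - 2}
\]
for some constant $C > 0$ depending only on $a, b, c$ and all sufficiently small $r$. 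Taking the $\limsup$ of $\log N(r\sqrt{2})/(-\log(r\sqrt{2}))$ as $r \to 0^+$ then gives $\overline{\dim}_B(\graph f) \leq 2 - \alpha$.

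The Lipschitz case is an immediate consequence: specializing the previous step to $\alpha = 1$ and combining with the general inequality $\dim_H \leq \underline{\dim}_B \leq \overline{\dim}_B$ recalled in Section~\ref{sec:prelim}, we obtain
\[
1 \leq \dim_H(\graph f) \leq \underline{\dim}_B(\graph f) \leq \overline{\dim}_B(\graph f) \leq 1,
\]
forcing all four quantities to equal $1$.

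There is no genuine obstacle in this proof; the only step requiring a little care is the bookkeeping when converting a covering by squares of side $r$ into one by balls of diameter $r\sqrt{2}$, and checking that the additive rounding constants are harmless once one divides by $-\log r$ and passes to the $\limsup$.
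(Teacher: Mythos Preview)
Your argument is correct and is precisely the standard justification of this well-known fact. Note, however, that the paper does not actually supply a proof of this lemma: it is stated immediately after the sentence ``The definitions of the Hausdorff and box dimension easily imply'' and is treated as a direct consequence of the definitions, with no further details given. Your write-up thus fills in exactly the routine box-counting and Lipschitz-projection arguments that the paper leaves to the reader.
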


\begin{defn}
The Hausdorff dimension of a finite (non-zero) Borel measure $\nu$ in $\R^n$ is defined as
\[                                                                              
\dim_H(\nu) = \inf\{\dim_H(A): A \text{ has full measure } \mu\}.                     
\]      
\end{defn}
                           
A well-known tool for estimating the Hausdorff dimension is the following fact (see e.g. \cite{F,M}).
                                                                                
\begin{lem}\label{lem:frostman} Let $B_r(x)$ denote the ball in $\R^n$ centred at $x$ of radius $r$. If 
\[
\liminf_{r \to 0^+}\frac{\log \nu (B_r(x))}{\log r} \geq D
\]
for $\nu$-almost every $x$, then $\dim_H(\nu) \geq D$, in particular $\dim_H(A) \geq D$ for every Borel set $A$ of positive measure $\nu$. The ball $B_r(x)$ can be replaced by an $n$-dimensional cube centred at $x$ of side $r$.
\end{lem}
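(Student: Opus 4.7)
The plan is to prove this classical mass distribution principle by a direct covering argument. Fix any $s < D$; it suffices to show that every Borel set $A$ with $\nu(A) > 0$ satisfies $\HH^s(A) > 0$, and then let $s \uparrow D$.

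First I would stratify the underlying space. For each $k \in \N$ put
\[
A_k = \{x \in \R^n : \nu(B_r(x)) \leq r^s \text{ for every } 0 < r \leq 1/k\}.
\]
By the hypothesis combined with $s < D$, for $\nu$-almost every $x$ there is some $r_0(x) > 0$ with $\nu(B_r(x)) \leq r^s$ whenever $0 < r \leq r_0(x)$, so $\bigcup_k A_k$ has full $\nu$-measure. Hence, given $A$ with $\nu(A) > 0$, there exists $k$ with $\nu(A \cap A_k) > 0$.

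Next, for any countable cover $\U$ of $A \cap A_k$ by sets of diameter at most $1/k$, I would pick, for each $U \in \U$ meeting $A \cap A_k$, a point $x_U \in U \cap A_k$; then $U \subset B_{\diam U}(x_U)$, so the defining inequality of $A_k$ gives $\nu(U) \leq (\diam U)^s$. Summing,
\[
\nu(A \cap A_k) \leq \sum_{U \in \U} \nu(U) \leq \sum_{U \in \U} (\diam U)^s,
\]
and passing to the infimum over such covers yields $\HH^s(A \cap A_k) \geq \nu(A \cap A_k) > 0$. Therefore $\dim_H(A) \geq s$, and letting $s \uparrow D$ gives $\dim_H(A) \geq D$. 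Since $\nu$ is non-zero, any set of full $\nu$-measure has positive $\nu$-measure, so the previous step also gives $\dim_H(\nu) \geq D$. The cube variant is identical: an $n$-dimensional cube of side $r$ centred at $x$ satisfies $B_{r/2}(x) \subset Q_r(x) \subset B_{r\sqrt{n}/2}(x)$, so replacing balls by cubes alters $\log\nu(\cdot)/\log r$ only by a term that vanishes as $r \to 0^+$, and one may cover by cubes $Q_{\diam U}(x_U)$ in place of balls with the same estimate.

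The only genuine technical point is the Borel measurability of $A_k$, which follows from semicontinuity of $x \mapsto \nu(B_r(x))$ in $x$ together with monotonicity in $r$, allowing one to restrict to a countable dense set of radii. Beyond that, the proof reduces to a single one-line inequality, so no serious obstacle arises.
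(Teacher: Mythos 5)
The paper does not prove this lemma at all; it is stated as a standard fact with references to \cite{F,M}, so there is no in-paper argument to compare against. Your proof is the standard mass distribution principle argument (stratify by the radius threshold via the sets $A_k$, dominate each covering set $U$ by a ball $B_{\diam U}(x_U)$, and sum to get $\HH^s(A\cap A_k)\geq \nu(A\cap A_k)>0$), and it is correct, including the passage to $\dim_H(\nu)$ and the ball-versus-cube comparison; the measurability of $A_k$ is indeed the only technical point and is handled adequately.
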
          

\begin{notat}
We set $\N = \{1, 2, 3, \ldots\}$. By $\card$ we denote the cardinality of a set. The integer part of $x \in \R$ (i.e. the largest integer not larger than $x$) is denoted by $[x]$. The $1$-dimensional Lebesgue measure of a set $A \subset \R$ is denoted by $|A|$.
\end{notat}

\section{Lemmas}\label{sec:lem}
Let $f$ satisfy the assumptions of Theorem~\ref{thm:main}. 
Obviously, we can assume that the period of $g$ is equal to $1$, the interval $I$ is closed and contained in $[0, 1]$ and $g$ is strictly increasing on $I$. Since $g$ is Lipschitz and
\[
\dim_H(\graph(f+h)) = \dim_H(\graph f), \quad \dim_B(\graph(f+h)) = \dim_B(\graph f)
\]
for every Lipschitz function $h:\R \to \R$ (see e.g. \cite{MW}), 
taking $h = \sum_{n=1}^N a_n \, g(b_n x + \theta_n)$ for any $N \geq 1$, we can replace $f$ by $\sum_{n=N+1}^\infty a_n \, g(b_n x + \theta_n)$. Therefore, we can assume
\begin{equation}\label{eq:eta}
\frac{a_{n+1}}{a_n} < \eta, \qquad\frac{b_{n+1}}{b_n} > \frac{1}{\eta}
\end{equation}
for an arbitrarily small fixed $\eta > 0$ (cf. Remark~\ref{rem:a_n}).
Note also that by the Stolz-Ces\'aro theorem,
\begin{equation}\label{eq:n/b_n}
\lim_{n\to\infty} \frac{n}{\log b_n} = \lim_{n\to\infty} \log\frac{b_n}{b_{n+1}} = 0.
\end{equation}

\begin{lem}\label{lem:d/b}
We have 
\[
\frac{d_{n+1}}{b_{n+1}} < 2\eta \frac{d_n}{b_n}
\]
for $\eta$ from \eqref{eq:eta}. In particular, $d_n/b_n \to 0$ as $n\to\infty$.
\end{lem}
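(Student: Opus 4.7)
\medskip

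The plan is direct: split $d_{n+1} = d_n + a_{n+1}b_{n+1}$ and estimate each piece using \eqref{eq:eta}. Dividing by $b_{n+1}$ gives
\[
\frac{d_{n+1}}{b_{n+1}} = \frac{d_n}{b_{n+1}} + a_{n+1}.
\]
For the first term I would write $\frac{d_n}{b_{n+1}} = \frac{b_n}{b_{n+1}} \cdot \frac{d_n}{b_n} < \eta \frac{d_n}{b_n}$, using $b_{n+1}/b_n > 1/\eta$ from \eqref{eq:eta}. For the second term, I would use $a_{n+1} < \eta a_n$ from \eqref{eq:eta} together with the trivial bound $a_n b_n \leq d_n$ to get $a_{n+1} < \eta a_n = \eta \frac{a_n b_n}{b_n} \leq \eta \frac{d_n}{b_n}$. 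Adding the two estimates yields the desired inequality.

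For the second assertion, I would iterate the inequality: $d_n/b_n < (2\eta)^{n-1} d_1/b_1$. Since $\eta > 0$ from \eqref{eq:eta} can be taken arbitrarily small (in particular smaller than $1/2$), the geometric factor $(2\eta)^{n-1}$ tends to $0$, so $d_n/b_n \to 0$.

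There is no serious obstacle here; the only thing to keep an eye on is that the reduction at the start of Section~\ref{sec:lem} lets us assume \eqref{eq:eta} with $\eta$ as small as we like, which is precisely what is needed to force $2\eta < 1$ in the iteration step.
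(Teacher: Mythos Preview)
Your argument is correct and essentially the same as the paper's: the paper computes the ratio $\frac{d_{n+1}b_n}{d_nb_{n+1}} = \frac{b_n}{b_{n+1}} + \frac{a_{n+1}b_n}{d_n}$ and bounds each summand by $\eta$ via \eqref{eq:eta} and $a_nb_n \leq d_n$, which is exactly your decomposition written as a single ratio. The iteration step for $d_n/b_n \to 0$ is also what the paper has in mind when it notes $2\eta < 1$ for $\eta$ small.
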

\begin{proof}
By \eqref{eq:eta}, we have
\[
\frac{d_{n+1}b_n}{d_nb_{n+1}} = \left(1+\frac{a_{n+1}b_{n+1}}{a_1b_1+\cdots +a_nb_n}\right)\frac{b_n}{b_{n+1}} < \frac{b_n}{b_{n+1}} + \frac{a_{n+1}}{a_n} < 2\eta < 1,
\]
if $\eta$ is chosen sufficiently small.
\end{proof}

\begin{lem}\label{lem:||<} There exists a constant $c_0 > 0$, such that for every $x, y \in \R$ and every $n\in\N$,
\[
|f(x)-f(y)| \leq c_0 (d_n|x-y| + a_{n+1}).
\]
\end{lem}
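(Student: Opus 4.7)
The plan is the standard split-at-scale-$n$ telescoping. Denote by $L$ the Lipschitz constant of $g$ and by $M = \sup_\R |g|$, which is finite because $g$ is continuous and periodic. For any $n\in\N$ I would write
\[
f(x) - f(y) = \sum_{k=1}^{n} a_k\bigl[g(b_k x + \theta_k) - g(b_k y + \theta_k)\bigr] + \sum_{k=n+1}^{\infty} a_k\bigl[g(b_k x + \theta_k) - g(b_k y + \theta_k)\bigr]
\]
and estimate the two pieces by completely different means.

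For the first (low-frequency) sum I would use the Lipschitz property of $g$ to get $\bigl|a_k[g(b_k x + \theta_k) - g(b_k y + \theta_k)]\bigr| \leq L\,a_k b_k\,|x-y|$, and then summing over $k\le n$ I obtain an upper bound of $L\,d_n\,|x-y|$, exactly the first term we want. For the second (high-frequency) tail the displacement estimate is useless when $b_k|x-y|$ is large, so instead I would use the trivial bound $|g(b_k x + \theta_k) - g(b_k y + \theta_k)| \leq 2M$. This gives a tail bounded by $2M\sum_{k=n+1}^\infty a_k$, and thanks to the geometric decay of $(a_k)$ guaranteed by \eqref{eq:eta}, namely $a_{n+1+j} < \eta^j a_{n+1}$, the tail is dominated by $2M\,a_{n+1}/(1-\eta)$.

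Combining the two estimates yields
\[
|f(x)-f(y)| \;\leq\; L\,d_n\,|x-y| + \frac{2M}{1-\eta}\,a_{n+1},
\]
so taking $c_0 = \max\!\bigl(L,\, 2M/(1-\eta)\bigr)$ gives the claim. No step is really a serious obstacle: the only thing to be careful about is that $g$ is indeed bounded (uses continuity plus periodicity) and that the geometric tail $\sum a_k \lesssim a_{n+1}$ requires the assumption \eqref{eq:eta} rather than just $a_{n+1}/a_n \to 0$, which is already in force after the reduction made at the start of Section~\ref{sec:lem}.
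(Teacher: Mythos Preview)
Your proof is correct and essentially identical to the paper's own argument: both split $f(x)-f(y)$ at level $n$, bound the low-frequency part by $L\,d_n|x-y|$ via the Lipschitz property of $g$, and control the tail by $2\sup|g|\sum_{k>n}a_k \le \frac{2\sup|g|}{1-\eta}a_{n+1}$ using \eqref{eq:eta}. The choice of $c_0$ and the remark about needing \eqref{eq:eta} rather than merely $a_{n+1}/a_n\to 0$ are also exactly right.
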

\begin{proof} Let $f_i(t) = a_i g(b_i t+\theta_i)$ for $i \in \N$. Then
\[
|f_i(x)-f_i(y)| \leq L a_i b_i |x-y|,
\]
where $L$ is the Lipschitz constant of $g$. This together with \eqref{eq:eta} implies
\begin{multline*}
|f(x)-f(y)| \leq \sum_{i=1}^n |f_i(x)-f_i(y)| + 2 \sup\left|f - \sum_{i=1}^n f_i\right|\\ \leq L d_n |x-y| + 2\sup |g| \sum_{i=n+1}^\infty a_i < L d_n |x-y| + \frac{2\sup |g|}{1-\eta}\; a_{n+1} <  c_0 (d_n|x-y| + a_{n+1})
\end{multline*}
for a suitable constant $c_0 > 0$.
\end{proof}

Let 
\[
I_{0,0} = I, \qquad I_{n, j} = \frac{I - \theta_n + j}{b_n}
\]
for $n \in \N$, $j \in \mathcal J_n$, where $\mathcal J_n \subset \Z$ is defined inductively as
\[
\mathcal J_0 = \{0\}, \qquad \mathcal J_n = \{j \in \Z: I_{n,j} \subset I_{n-1,j'} \text{ for some } j' \in \mathcal J_{n-1}\}
\]
for $n \in \N$. Then every $I_{n, j}$ is an interval of length $|I|/b_n$ and the gap between two consecutive intervals $I_{n,j}, I_{n,j+1} \subset I_{n-1,j'}$ has length $(1-|I|)/b_n$. By definition,
\begin{equation}\label{eq:I_n,j}
b_n x+\theta_n\in I \mod 1 \qquad \text{for every } x \in I_{n,j}.
\end{equation}
We will call $I_{n,j}$ intervals of $n$-th generation. By \eqref{eq:eta}, we can assume that every interval of $n$-th generation contains at least two intervals of next generation. 

\begin{lem}\label{lem:J_n}
There exists $q > 0$ such that the interval $I$ contains more than $q b_1$ intervals of first generation and for every $n\in\N$, every interval of $n$-th generation contains more than $q b_{n+1}/b_n$ intervals of $(n+1)$-th generation. Moreover,
\[
\card \mathcal J_n > q^n b_n.
\]
\end{lem}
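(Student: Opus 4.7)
The plan is to exploit the periodic structure of the families $\{I_{n,j}\}_{j\in\Z}$ and a simple counting of arithmetic progressions. Since the intervals $I_{n,j}$ for $j\in\Z$ are translates of a fixed interval of length $|I|/b_n$, spaced at distance $1/b_n$, any interval $J \subset \R$ of length $L$ contains at least $\lfloor Lb_n\rfloor - 1$ of the $I_{n,j}$. I would apply this observation first with $J = I$, $L = |I|$ for the first-generation count, and then with $J = I_{n-1,j'}$, $L = |I|/b_{n-1}$ for the count of children of a parent of generation $n-1$. This yields $\card \mathcal J_1 \geq \lfloor |I|b_1 \rfloor - 1$ and, for every parent interval of generation $n-1$, at least $\lfloor |I|b_n/b_{n-1} \rfloor - 1$ children.

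Next I would appeal to the reductions at the start of this section: by discarding a long enough prefix of the series, I may assume $\eta$ in \eqref{eq:eta} is as small as desired, and in particular can force $|I|b_1$ and $|I|/\eta$ to exceed $4$. Since $b_n/b_{n-1} > 1/\eta$, this gives $|I|b_n/b_{n-1} > 4$ for every $n\geq 2$, and then both floor expressions above are bounded below by $(|I|/2) b_1$ and $(|I|/2)(b_n/b_{n-1})$ respectively. Setting $q := |I|/2$ establishes the first two assertions.

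Finally, the bound $\card \mathcal J_n > q^n b_n$ follows by an immediate induction on $n$: summing the branching over all parents yields
\[
\card \mathcal J_n > q\,\frac{b_n}{b_{n-1}}\,\card\mathcal J_{n-1},
\]
and telescoping against $\card \mathcal J_1 > qb_1$ gives $\card \mathcal J_n > q^n b_n$. The whole argument is elementary combinatorics of an arithmetic progression of intervals; there is no genuine obstacle, only the bookkeeping to ensure $\eta$ is small enough and $b_1$ large enough. This is cost-free because, as noted at the beginning of Section~\ref{sec:lem}, all the dimension quantities under consideration are invariant under dropping a finite prefix of the series.
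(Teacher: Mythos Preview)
Your proof is correct and follows essentially the same approach as the paper: both count how many period-$1/b_{n+1}$ translates of a fixed interval fit inside a parent of length $|I|/b_n$, obtaining a lower bound of the form $|I|\,b_{n+1}/b_n - 2$ (your $\lfloor\cdot\rfloor - 1$ is the same estimate up to a unit), and then telescope. The only cosmetic difference is that you make the constant explicit as $q = |I|/2$ after arranging $|I|\,b_{n+1}/b_n > 4$, whereas the paper simply asserts the existence of some $q>0$.
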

\begin{proof}
For $n \geq 0$, $j\in\mathcal J_n$ let $N_{n,j}$ be the number of intervals of $(n+1)$-th generation contained in $I_{n,j}$. Since the intervals of $(n+1)$-th generation have length $|I|/b_{n+1}$ and are separated by gaps of length at least $(1-|I|)/b_{n+1}$, we have, setting $b_0 = 1$,
\[
\frac{|I|}{b_n} < \frac{N_{n,j}+2}{b_{n+1}},
\]
so by \eqref{eq:eta},
\[
N_{n,j} > |I|\frac{b_{n+1}}{b_n} - 2 > q \frac{b_{n+1}}{b_n}
\]
for some constant $q > 0$. Moreover,
\[
\card \mathcal J_n \geq N_{0,0}\min_{j_1\in \mathcal J_1} N_{1,j_1} \cdots \min_{j_{n-1}\in \mathcal J_{n-1}} N_{n-1,j_{n-1}} > q^n b_n.
\]
\end{proof}
\begin{lem}\label{lem:||>} There exist $c_1, c_2 > 0$, such that for every $n \in \N$, $j \in \mathcal J_n$ and every $x, y \in I_{n,j}$,
\[
|f(x)-f(y)| \geq c_1d_n|x-y| - c_2a_{n+1}.
\]
\end{lem}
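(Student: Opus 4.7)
The plan is to split $f$ into the partial sum $\sum_{i=1}^n f_i$ and the tail $\sum_{i=n+1}^\infty f_i$, where $f_i(t)=a_ig(b_it+\theta_i)$. Since the claim concerns $x,y$ in the same interval $I_{n,j}$ of $n$-th generation, the point is that the first $n$ summands cooperate: each one is strictly monotone in $x$ on $I_{n,j}$, and they all share the same direction of monotonicity, so their contributions add constructively to a lower bound of order $d_n|x-y|$. The tail, being absolutely summable, contributes an error of order $a_{n+1}$.

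First I would observe that by the nested construction of the $I_{n,j}$ together with \eqref{eq:I_n,j}, for every $i\le n$ and every $x\in I_{n,j}$, one has $b_ix+\theta_i\in I\bmod 1$. Moreover, because $I_{n,j}$ has length $|I|/b_n<|I|/b_i$, the image $\{b_ix+\theta_i:x\in I_{n,j}\}$ lies inside a single integer translate of $I$. Using the hypothesis that $g$ is strictly increasing on $I$ with $|g(u)-g(v)|\ge \delta|u-v|$ there, this gives, for $x,y\in I_{n,j}$ with $x\ge y$,
\[
f_i(x)-f_i(y)=a_i\bigl(g(b_ix+\theta_i)-g(b_iy+\theta_i)\bigr)\ge \delta a_ib_i(x-y),
\]
and in particular all these differences are nonnegative.

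Summing over $i=1,\dots,n$ yields
\[
\sum_{i=1}^n\bigl(f_i(x)-f_i(y)\bigr)\ge \delta\, d_n\,(x-y),
\]
so in particular the modulus of the sum satisfies the same bound. For the tail, using $\sup|g|<\infty$ and \eqref{eq:eta},
\[
\Bigl|\sum_{i=n+1}^\infty\bigl(f_i(x)-f_i(y)\bigr)\Bigr|\le 2\sup|g|\sum_{i=n+1}^\infty a_i\le \frac{2\sup|g|}{1-\eta}\,a_{n+1}.
\]
Combining the two by the triangle inequality (and repeating the argument with the roles of $x,y$ swapped when $y>x$) gives the desired inequality with $c_1=\delta$ and $c_2=2\sup|g|/(1-\eta)$.

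The only genuine content, and thus the main thing to get right, is the uniform statement that $b_ix+\theta_i\in I\bmod 1$ for \emph{all} $i\le n$ and all $x\in I_{n,j}$ — this is exactly what the inductive definition of $\mathcal J_n$ is designed to guarantee, via the chain $I_{n,j}\subset I_{n-1,j_{n-1}}\subset\cdots\subset I_{1,j_1}\subset I_{0,0}=I$. Once this is recorded, the strict monotonicity of $g$ on $I$ forces the $n$ partial contributions to have a common sign, and the estimate is essentially automatic. Note that, compared to Lemma~\ref{lem:||<}, the nontrivial gain is replacing a triangle-inequality upper bound by a matching lower bound on the interval $I_{n,j}$, and it is precisely the common-sign phenomenon that makes this possible.
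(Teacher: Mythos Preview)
Your proof is correct and follows essentially the same route as the paper: split $f$ into the partial sum $\sum_{i\le n}f_i$ and the tail, use the nested inclusion $I_{n,j}\subset I_{i,j'}$ together with \eqref{eq:I_n,j} to make all first $n$ increments $f_i(x)-f_i(y)$ have the common lower bound $\delta a_ib_i(x-y)$, and bound the tail by $\frac{2\sup|g|}{1-\eta}a_{n+1}$ via \eqref{eq:eta}. The paper's argument is the same, with the identical constants $c_1=\delta$, $c_2=2\sup|g|/(1-\eta)$; the only cosmetic difference is that it writes the tail as $2\sup\bigl|f-\sum_{i=1}^n f_i\bigr|$ before expanding.
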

\begin{proof} We can assume $x > y$. Since every interval of $n$-th generation is contained in an interval of $i$-th generation for every $i \leq n$, by \eqref{eq:I_n,j} we have 
\[
f_i(x) - f_i(y) > \delta a_i b_i (x-y) \qquad \text{for } i = 1, \ldots, n, 
\]
so
\begin{multline*}
f(x)-f(y) > \delta d_n(x-y) - 2 \sup\left|f - \sum_{i=1}^n f_i\right| \geq \delta d_n(x-y) - 2\sup |g| \sum_{i=n+1}^\infty a_i \\> \delta d_n(x-y) - \frac{2\sup |g|}{1-\eta}\; a_{n+1} > c_1d_n|x-y| - c_2a_{n+1}
\end{multline*}
for suitable constants $c_1, c_2 > 0$.
\end{proof}

\section{Box dimension}\label{sec:BD}

Now we prove (simultaneously) two following propositions.

\begin{prop}\label{prop:holder} Let $\overline{\gamma} = \limsup_{n\to\infty} \log^+ d_n/\log b_n$. Then the following assertions hold:
\begin{itemize}
\item[\rm (a)] 
The function $f$ is H\"older continuous with every exponent smaller than 
$1 - \overline{\gamma}$ and is not H\"older continuous with any exponent larger than $1 - \overline{\gamma}$.
\item[\rm (b)] If $a_n = b_n^{-\alpha}$ for some $\alpha \in (0, 1]$, then $f$ is H\"older continuous with exponent $\alpha = 1 - \overline{\gamma}$.
\item[\rm (c)] If the sequence $d_n$ is bounded, then $f$ is Lipschitz continuous, so $\dim_H(\graph f) = \dim_B(\graph f) = 1$.
\end{itemize}
\end{prop}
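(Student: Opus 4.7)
The plan is to derive the Hölder continuity in (a) and (b) from Lemma~\ref{lem:||<}, the sharpness of the exponent in (a) from Lemma~\ref{lem:||>}, and the Lipschitz assertion in (c) directly from Lemma~\ref{lem:||<} combined with Lemma~\ref{lem:holder}. First I would extract the standard consequence of $\overline{\gamma} = \limsup \log^+ d_n/\log b_n$: for every $\epsilon>0$ and all large $n$,
\[
d_n \le \max(1, b_n^{\overline{\gamma}+\epsilon}), \qquad a_{n+1} \le b_{n+1}^{\overline{\gamma}-1+\epsilon},
\]
where the second bound comes from $a_{n+1}\le d_{n+1}/b_{n+1}$ applied with the first bound at index $n+1$. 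Note that by Lemma~\ref{lem:d/b} we always have $\overline{\gamma}\le 1$, so the exponents above are harmless.

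For the positive direction of (a), given $\alpha<1-\overline{\gamma}$ I would choose $\epsilon>0$ with $\alpha<1-\overline{\gamma}-\epsilon$, and for $x\ne y$ with small $|x-y|$ select the (eventually unique) $n$ with $1/b_{n+1}\le |x-y|<1/b_n$. Plugging into Lemma~\ref{lem:||<}, the two error terms balance: $d_n|x-y|$ is controlled because $|x-y|^{1-\alpha}\le b_n^{\alpha-1}$ absorbs the factor $b_n^{\overline{\gamma}+\epsilon}$ down to $|x-y|^{\alpha}$, and $a_{n+1}\le b_{n+1}^{\overline{\gamma}-1+\epsilon}\le |x-y|^{1-\overline{\gamma}-\epsilon}\le |x-y|^{\alpha}$. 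Part (b) is the same calculation done without slack: when $a_n=b_n^{-\alpha}$ the hypothesis $b_{n+1}/b_n\to\infty$ forces the last term to dominate the sum $\sum_{i\le n} b_i^{1-\alpha}$, giving $b_n^{1-\alpha}\le d_n\le 2 b_n^{1-\alpha}$ for large $n$, and the same choice of $n$ yields both $d_n|x-y|\le 2|x-y|^\alpha$ and $a_{n+1}\le |x-y|^\alpha$ directly with $\epsilon=0$.

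For the negative direction of (a), I would test Hölder continuity on the two endpoints of a generation-$n$ interval $I_{n,j}$, which lie at distance $|I|/b_n$. Lemma~\ref{lem:||>} gives a lower bound $c_1 d_n|I|/b_n - c_2 a_{n+1}$ on the oscillation, and the only non-routine step — and what I expect to be the main technical obstacle — is to absorb the negative correction $c_2 a_{n+1}$ into the leading term. This is precisely what Lemma~\ref{lem:d/b} is for: it gives $a_{n+1}\le d_{n+1}/b_{n+1}<2\eta\, d_n/b_n$, so by choosing $\eta$ in \eqref{eq:eta} small enough that $c_1|I|-2\eta c_2>0$ one obtains a clean lower bound $|f(x)-f(y)|\ge c_3\, d_n/b_n$. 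If $f$ were Hölder of exponent $\alpha$, this would force $d_n\le C b_n^{1-\alpha}$ for all large $n$, whence $\overline{\gamma}\le 1-\alpha$ and therefore $\alpha\le 1-\overline{\gamma}$.

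Finally, for (c), if $d_n\le M$ uniformly then $a_{n+1}\le a_{n+1}b_{n+1}/b_{n+1}\le M/b_{n+1}\to 0$, so letting $n\to\infty$ in Lemma~\ref{lem:||<} gives $|f(x)-f(y)|\le c_0 M|x-y|$; Lemma~\ref{lem:holder} then yields $\dim_H(\graph f)=\dim_B(\graph f)=1$.
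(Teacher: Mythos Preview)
Your proposal is correct and follows essentially the same route as the paper: the positive Hölder direction comes from Lemma~\ref{lem:||<} after choosing $n$ with $1/b_{n+1}\le |x-y|<1/b_n$, the sharpness from the oscillation lower bound of Lemma~\ref{lem:||>} on an interval $I_{n,j}$ with the correction term absorbed via $a_{n+1}\lesssim \eta\, d_n/b_n$, and part~(b) via $b_n^{1-\alpha}<d_n<2b_n^{1-\alpha}$. The only cosmetic differences are that the paper bounds $a_{n+1}$ by $\eta a_n\le \eta d_n/b_n$ directly from \eqref{eq:eta} rather than going through Lemma~\ref{lem:d/b}, and that for~(c) the paper fixes $k$ depending on $r$ instead of passing to the limit $n\to\infty$ in Lemma~\ref{lem:||<}; your version of~(c) is in fact slightly cleaner.
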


\begin{prop}\label{prop:upperBD}
We have
\[
\overline{\dim}_B(\graph f) = 1 + \limsup_{n\to\infty} \frac{\log^+ d_n}{\log b_n}.
\]
\end{prop}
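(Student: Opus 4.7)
The plan is to prove the two inequalities on $\overline{\dim}_B(\graph f)$ separately, writing $\overline{\gamma}:=\limsup_{n\to\infty}\log^+d_n/\log b_n$. Note first that $d_n\leq a_1 n b_n$ together with \eqref{eq:n/b_n} gives $\overline{\gamma}\leq 1$, so in the extreme case $\overline{\gamma}=1$ both inequalities $\overline{\dim}_B(\graph f)\leq 2$ and $\overline{\dim}_B(\graph f)\geq 1$ are trivial; from now on I assume $\overline{\gamma}<1$.

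For the upper bound I would first prove part (a) of Proposition~\ref{prop:holder}, namely that $f$ is H\"older continuous with every exponent $\alpha<1-\overline{\gamma}$, and then invoke Lemma~\ref{lem:holder} to obtain $\overline{\dim}_B(\graph f)\leq 2-\alpha$ for each such $\alpha$, giving $\overline{\dim}_B(\graph f)\leq 1+\overline{\gamma}$ in the limit $\alpha\nearrow 1-\overline{\gamma}$. To establish the H\"older estimate, fix $\alpha<1-\overline{\gamma}$ and $\epsilon>0$ with $\overline{\gamma}+\alpha+\epsilon<1$. By definition of $\overline{\gamma}$, for all large $n$ one has $d_n\leq b_n^{\overline{\gamma}+\epsilon}$ and consequently $a_{n+1}\leq d_{n+1}/b_{n+1}\leq b_{n+1}^{\overline{\gamma}+\epsilon-1}$. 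Given $|x-y|$ small, I select $n$ with $1/b_{n+1}\leq|x-y|<1/b_n$ and apply Lemma~\ref{lem:||<} with this index. The summand $d_n|x-y|$ is increasing in $|x-y|$, so the ratio $d_n|x-y|/|x-y|^\alpha=d_n|x-y|^{1-\alpha}$ is maximised as $|x-y|\to 1/b_n$, where it is bounded by $b_n^{\overline{\gamma}+\epsilon+\alpha-1}$; the summand $a_{n+1}$ is independent of $|x-y|$, so the ratio $a_{n+1}/|x-y|^\alpha$ is maximised at $|x-y|=1/b_{n+1}$, where it is bounded by $b_{n+1}^{\overline{\gamma}+\epsilon+\alpha-1}$. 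Both exponents are negative by the choice of $\epsilon$, so both ratios are uniformly bounded and $f$ is H\"older with exponent $\alpha$.

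For the lower bound I would argue by an oscillation estimate at the scales $r_n=1/b_n$. Applying Lemma~\ref{lem:||>} to the endpoints of a single interval $I_{n,j}$ (of length $|I|/b_n$) shows that the oscillation of $f$ on $I_{n,j}$ is at least $c_1|I|d_n/b_n-c_2a_{n+1}$; since \eqref{eq:eta} and $a_nb_n\leq d_n$ together give $a_{n+1}b_n\leq\eta d_n$, choosing $\eta$ small ensures this oscillation exceeds $c\,d_n/b_n$ for some absolute $c>0$. Hence covering the portion of $\graph f$ above each $I_{n,j}$ requires at least $\max(1,cd_n)$ squares of side $r_n$, and because the horizontal gap $(1-|I|)/b_n$ between consecutive $I_{n,j}$ is of the same order as $r_n$, any one such square can participate in the covering for only a bounded number of indices $j$. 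Combining this with $\card\mathcal J_n>q^nb_n$ from Lemma~\ref{lem:J_n} yields $N(r_n)\geq c'\max(1,d_n)q^nb_n$, so
\[
\frac{\log N(r_n)}{-\log r_n}\geq 1+\frac{\log^+d_n}{\log b_n}+\frac{n\log q}{\log b_n}+o(1).
\]
Since $n/\log b_n\to 0$ by \eqref{eq:n/b_n}, taking $\limsup_{n\to\infty}$ along a subsequence realising $\overline{\gamma}$ gives $\overline{\dim}_B(\graph f)\geq 1+\overline{\gamma}$.

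The main obstacle is the H\"older step: one must handle the case in which $b_{n+1}/b_n$ tends to infinity arbitrarily fast, which at first glance seems to preclude controlling $|f(x)-f(y)|$ uniformly over the whole range $[1/b_{n+1},1/b_n)$. The resolution is that the two summands of Lemma~\ref{lem:||<} are monotone in \emph{opposite} directions as $|x-y|$ varies in this range, so each contributes its worst case at a different endpoint; at each endpoint, the resulting ratio simplifies to a single power of $b_n$ or $b_{n+1}$ whose exponent is negative under the hypothesis $\alpha<1-\overline{\gamma}$. Once this observation is in place, the remaining steps are routine given Lemmas~\ref{lem:d/b}--\ref{lem:||>}.
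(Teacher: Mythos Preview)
Your approach is essentially the same as the paper's: prove the upper bound via H\"older continuity with any exponent $\alpha<1-\overline{\gamma}$ using Lemma~\ref{lem:||<} with the index $n$ chosen so that $1/b_{n+1}\leq |x-y|<1/b_n$, and prove the lower bound by the oscillation estimate on the intervals $I_{n,j}$ combined with the count $\card\mathcal J_n>q^nb_n$ and \eqref{eq:n/b_n}. The paper presents both halves in the same way (its inequality \eqref{eq:V_r<} is exactly your endpoint analysis, and its inequality \eqref{eq:V_I>} is your oscillation bound).

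There is, however, one slip. In the case $\overline{\gamma}=1$ you assert that the needed lower bound is $\overline{\dim}_B(\graph f)\geq 1$ and hence trivial; but the proposition requires $\overline{\dim}_B(\graph f)\geq 1+\overline{\gamma}=2$, which is certainly not automatic. Fortunately your own oscillation argument for the lower bound nowhere uses the hypothesis $\overline{\gamma}<1$, so the fix is simply to drop that restriction in the lower-bound half (keeping it only for the H\"older upper bound, where it is genuinely needed to have some $\alpha>0$). Once you remove the spurious case split, your proof matches the paper's.
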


\begin{proof}
For a set $A \subset \R$ let
\[
V_A = \sup_A f - \inf_A f. 
\]
Take a small $r > 0$. Let $k = k(r) \in \N$ be such that
\begin{equation}\label{eq:k}
\frac{1}{b_{k+1}} \leq  r < \frac{1}{b_k}. 
\end{equation}
By Lemma~\ref{lem:||<}, for every $t\in \R$ we have
\begin{equation}\label{eq:V<}
V_{[t, t+r]} \leq c_0(d_k r + a_{k+1}) < c_0\left(d_k r + \frac{d_{k+1}}{b_{k+1}} \right) \leq 2 c_0 \max\left(d_k r,\, \frac{d_{k+1}}{b_{k+1}}\right).
\end{equation}
In particular, if the sequence $d_n$ is bounded, then \eqref{eq:k} and \eqref{eq:V<} give 
\[
V_{[t, t+r]} < 2c_0\max\left(d r, \,\frac{d}{b_{k+1}} \right) \leq 2c_0 d r
\]
for some constant $d > 0$, which means that $f$ is Lipschitz and proves the assertion~(c) of Proposition~\ref{prop:holder}. Hence, we assume from now on that $d_n \to \infty$ as $n\to\infty$, in particular $d_n > 1$ and $\log^+ d_n = \log d_n$ for large $n$. 

By \eqref{eq:k} and Lemma~\ref{lem:d/b}, we have
\begin{align*}
d_kr &= r^{1 + \log d_k/\log r} < r^{1 - \log d_k/\log b_k},\\
\frac{d_{k+1}}{b_{k+1}} &= \left(\frac{1}{b_{k+1}}\right)^{1 - \log d_{k+1}/\log b_{k+1}}
\leq r^{1 - \log d_{k+1}/\log b_{k+1}}.
\end{align*}

Hence, \eqref{eq:V<} implies
\begin{equation}\label{eq:V_r<}
V_{[t, t+r]} \leq 2c_0 r^{1 - \max(\log d_k/\log b_k, \, \log d_{k+1}/\log b_{k+1})}.
\end{equation}

On the other hand, by Lemma~\ref{lem:||>}, \eqref{eq:eta} and Lemma~\ref{lem:d/b}, for every $n\in\N$, $j\in \mathcal J_n$ we have
\begin{multline}\label{eq:V_I>}
V_{I_{n,j}} \geq c_1 |I|\frac{d_n}{b_n} - c_2a_{n+1} > c_1|I|\frac{d_n}{b_n} - c_2\eta a_n 
> (c_1|I|-\eta c_2)\frac{d_n}{b_n}\\ = (c_1|I|-\eta c_2) \left(\frac{1}{b_n}\right)^{1- \log d_n /\log b_n} > (c_1|I|-\eta c_2) |I_{n,j}|^{1- \log d_n /\log b_n},
\end{multline}
where $c_1|I|-\eta c_2 > 0$, if $\eta$ was chosen sufficiently small.

Let $s < 1 - \overline{\gamma}$. Then for sufficiently large $k$ we have 
\[
1 - \max(\log d_k/\log b_k, \, \log d_{k+1}/\log b_{k+1}) > s,                                                                                                                                                              \]
so by \eqref{eq:V_r<}, the function $f$ is H\"older continuous with exponent $s$. 

Take now $s > 1 - \overline{\gamma}$ and suppose $f$ is H\"older continuous with exponent $s$. Then there exists $\varepsilon > 0$ such that $1 - \log d_n / \log b_n < s -\varepsilon$ for infinitely many $n$, which contradicts \eqref{eq:V_I>}. In this way we have proved the assertion~(a) of Proposition~\ref{prop:holder}. Note that the assertion~(a) implies immediately that 
\begin{equation}\label{eq:BD<}
\overline\dim_B(\graph f) \leq 1 + \overline{\gamma}
\end{equation}
(see Lemma~\ref{lem:holder}).
If $a_n = b_n^{-\alpha}$ for some $\alpha \in (0, 1]$, then $d_n = b_1^{1-\alpha}+\cdots + b_n^{1-\alpha}$, so (since $b_{n+1}/b_n \to \infty$), we have 
\[
b_n^{1-\alpha} < d_n < 2 b_n^{1-\alpha}
\]
for large $n$. Hence, $\alpha = 1 - \overline{\gamma}$ and, by \eqref{eq:k} and \eqref{eq:V<}, 
\[
V_{[t,t+r]} < 4 c_0 \max\left(b_k^{1-\alpha}r, \, b_{k+1}^{-\alpha}\right) \leq 4 c_0 r^\alpha,
\]
which proves the assertion~(b) of Proposition~\ref{prop:holder}.

Let $N(r)$ be the minimal number of squares with vertical and horizontal sides of length $r$ needed to cover $\graph f|_I$. 
Since, by Lemma~\ref{lem:J_n}, for every $n$ there are more than $q^n b_n$ disjoint intervals $I_{n,j}$ of lengths $|I|/b_n$ contained in $|I|$, using \eqref{eq:V_I>} we get 
\[
N(|I_{n,j}|) > c q^n b_n |I_{n,j}|^{-\log d_n /\log b_n}
\]
for some constant $c > 0$, so (using \eqref{eq:n/b_n}) we have
\begin{multline*}
\overline\dim_B(\graph f) \geq \limsup_{n\to\infty} \frac{\log N(|I_{n,j}|)}{-\log |I_{n,j}|} \\\geq  \limsup_{n\to\infty} \frac{n\log q + \log b_n + \log d_n(1-\log|I|/\log b_n)}{\log b_n} = 1 + \overline{\gamma}.
\end{multline*}
This together with \eqref{eq:BD<} gives 
\[
\overline\dim_B(\graph f) = 1 +\overline{\gamma},
\]
which proves Proposition~\ref{prop:upperBD}.
\end{proof}

\begin{prop}\label{prop:lowerBD}
We have
\[
\underline{\dim}_B(\graph f) \leq 1 + \liminf_{n\to\infty} \frac{\log^+ d_n}{\log (b_{n+1}d_n/d_{n+1})}.
\]
\end{prop}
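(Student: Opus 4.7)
The plan is to exhibit a judiciously chosen sequence of scales $r_n \to 0^+$ at which a good covering of the graph by small squares is available, and then read off the desired upper bound on $\underline{\dim}_B(\graph f)$ directly from the definition. The right choice, suggested by matching the two error terms in Lemma~\ref{lem:||<}, is
\[
r_n = \frac{d_{n+1}}{b_{n+1}\, d_n},
\]
so that $-\log r_n = \log(b_{n+1}d_n/d_{n+1})$ is exactly the denominator appearing in the target formula. I would first verify that $r_n \to 0$: by Lemma~\ref{lem:d/b}, $d_{n+1}/b_{n+1} < 2\eta\, d_n/b_n$, hence $1/r_n > b_n/(2\eta) \to \infty$; in particular $\log(b_{n+1}d_n/d_{n+1}) > 0$ for all large $n$, so the expression is well defined.

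Next, I would estimate the oscillation of $f$ at scale $r_n$. Using $a_{n+1} = (d_{n+1}-d_n)/b_{n+1}$, one computes
\[
d_n r_n + a_{n+1} = \frac{d_{n+1}}{b_{n+1}} + \frac{d_{n+1}-d_n}{b_{n+1}} < \frac{2 d_{n+1}}{b_{n+1}} = 2 d_n r_n,
\]
so Lemma~\ref{lem:||<} yields $V_{[t, t+r_n]} \leq 2c_0\, d_n r_n$ for every $t \in \R$. Fix any bounded interval $[0,L]$. Dividing $[0,L]$ into $\lceil L/r_n \rceil$ pieces of length at most $r_n$ and covering each corresponding piece of the graph by a vertical stack of squares of side $r_n$, the number $N(r_n)$ of squares needed to cover $\graph f|_{[0, L]}$ satisfies
\[
N(r_n) \leq \Bigl\lceil \tfrac{L}{r_n} \Bigr\rceil \bigl(2c_0 \max(d_n, 1) + 1\bigr) \leq C\,\frac{\max(d_n, 1)}{r_n}
\]
for a constant $C$ independent of $n$.

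Taking logarithms and dividing by $-\log r_n = \log(b_{n+1}d_n/d_{n+1})$ gives
\[
\frac{\log N(r_n)}{-\log r_n} \leq 1 + \frac{\log^+ d_n + O(1)}{\log(b_{n+1}d_n/d_{n+1})},
\]
and taking $\liminf_{n\to\infty}$ yields $\underline{\dim}_B(\graph f|_{[0,L]}) \leq 1 + \liminf_n \log^+ d_n / \log(b_{n+1}d_n/d_{n+1})$ (noting that the $O(1)$ term disappears because, by Lemma~\ref{lem:d/b}, the denominator tends to $+\infty$). Since $L$ is arbitrary and $f$ is continuous (hence bounded on bounded intervals, by Proposition~\ref{prop:holder}(a)), the same bound holds for $\underline{\dim}_B(\graph f)$.

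There is no real obstacle here: the only thing to check carefully is that the chosen $r_n$ really tends to zero and that the denominator is eventually positive, both of which follow cleanly from Lemma~\ref{lem:d/b} together with the hypothesis $b_n \to \infty$ guaranteed by \eqref{eq:eta}. The heart of the argument is simply the scale choice $r_n = d_{n+1}/(b_{n+1} d_n)$, which is exactly what makes the Lipschitz-type term $d_n r_n$ and the tail term $a_{n+1}$ in Lemma~\ref{lem:||<} comparable.
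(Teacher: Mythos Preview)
Your argument is correct. The key computation---choosing $r_n = d_{n+1}/(b_{n+1}d_n)$ so that the Lipschitz term $d_n r_n$ and the tail term $a_{n+1}$ in Lemma~\ref{lem:||<} balance, then reading off $N(r_n) \leq C\max(d_n,1)/r_n$---goes through exactly as you describe, and Lemma~\ref{lem:d/b} indeed guarantees $r_n \to 0$ and $-\log r_n \to \infty$.

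Your route is genuinely different from, and more direct than, the paper's. The paper works at \emph{every} small scale $r$, encoding it via integers $k(r)$ and $m(r)$ with $m/b_{k+1} \leq r < (m+1)/b_{k+1}$, bounds the oscillation by $\max(md_k/b_{k+1},\, d_{k+1}/b_{k+1})$, and is then left to evaluate
\[
\liminf_{r\to 0^+}\max\left(\frac{\log d_k}{\log(b_{k+1}/m)},\ \frac{\log(d_{k+1}/m)}{\log(b_{k+1}/m)}\right),
\]
which requires a separate technical lemma (Lemma~\ref{lem:liminf}) analyzing the minimum over $m$ of this maximum. Your choice $r_n$ corresponds precisely to sitting at the crossover $m \approx d_{k+1}/d_k$ where those two terms coincide, so you land directly on the answer without the optimization step. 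The gain is a considerably shorter proof of this proposition; the cost is that Lemma~\ref{lem:liminf} is reused later in the paper (in the lower bound for the Hausdorff dimension, Proposition~\ref{prop:HD}), so in the full development the paper's detour is not wasted effort.
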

\begin{proof}Take a small $r > 0$. Let $k = k(r), m  = m(r) \in \N$ be such that
\begin{equation}\label{eq:k,m}
\frac{1}{b_{k+1}} \leq  r < \frac{1}{b_k}, \qquad \frac{m}{b_{k+1}} \leq r < \frac{m+1}{b_{k+1}}. 
\end{equation}
Obviously, $k \to \infty$ as $r \to 0^+$. By definition, $ m \in \{1, \ldots, m_k\}$, where
\[
m_k = \begin{cases} b_{k+1}/b_k - 1 & \text{if } b_{k+1}/b_k \in \N\\
[b_{k+1}/b_k] & \text{otherwise}.  
\end{cases}
\]
By Proposition~\ref{prop:holder}~(c), we can assume that $d_k > 1$.
By \eqref{eq:V<}, for every $t\in\R$
\[
V_{[t,t+r]} < 2c_0\max\left(\frac{(m+1)d_k}{b_{k+1}}, \, \frac{d_{k+1}}{b_{k+1}}\right) \leq 4c_0 \max\left(\frac{md_k}{b_{k+1}}, \, \frac{d_{k+1}}{b_{k+1}}\right),
\]
so by \eqref{eq:k,m},
\begin{align*}
\underline{\dim}_B(\graph f) &\leq 2 + \liminf_{r\to 0^+}\sup_{t\in\R}\frac{\log V_{[t,t+r]}}{-\log r}\\ &\leq 2 + \liminf_{r\to 0^+}\max\left(\frac{\log (m d_k / b_{k+1})}{\log (b_{k+1}/m)}, \, \frac{\log (d_{k+1}/b_{k+1})}{\log (b_{k+1}/m)}\right)\\ &= 1 + \liminf_{r\to 0^+}\max\left(\frac{\log d_k}{\log (b_{k+1}/m)}, \, \frac{\log (d_{k+1}/m)}{\log (b_{k+1}/m)}\right).
\end{align*}
To end the proof of the proposition, it remains to use Lemma~\ref{lem:liminf}, which we prove below.
\end{proof}

\begin{lem}\label{lem:liminf}
For $r > 0$ and $k = k(r)$, $m= m(r)$ defined in \eqref{eq:k,m}, we have
\[
\liminf_{r\to 0^+}\max\left(\frac{\log d_k}{\log (b_{k+1}/m)},\, \frac{\log (d_{k+1}/m)}{\log (b_{k+1}/m)}\right) = \liminf_{n\to\infty} \frac{\log d_n}{\log (b_{n+1}d_n/d_{n+1})}.
\]
\end{lem}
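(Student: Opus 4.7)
The plan is to convert the $\liminf$ on the left into a minimisation over $m$ for each fixed $k$. Since $r\mapsto(k(r),m(r))$ is onto the admissible pairs with $k$ large, one has
\[
\liminf_{r\to 0^+}\phi(k(r),m(r))\;=\;\liminf_{k\to\infty}\min_{1\leq m\leq m_k}\phi(k,m),
\]
where $\phi(k,m)$ denotes the maximum appearing on the left-hand side of the identity. In the variable $u=\log m$, the first fraction $A(u)=\log d_k/(\log b_{k+1}-u)$ is strictly increasing, whereas $B'(u)=(\log d_{k+1}-\log b_{k+1})/(\log b_{k+1}-u)^2$ is negative by Lemma~\ref{lem:d/b}, so $B(u)=(\log d_{k+1}-u)/(\log b_{k+1}-u)$ is strictly decreasing. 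Because $A(0)<B(0)$ and $A(\log m_k)>B(\log m_k)$ --- the latter using $d_{k+1}/d_k<2\eta\,b_{k+1}/b_k<m_k$ for large $k$, again by Lemma~\ref{lem:d/b} --- the graphs of $A$ and $B$ cross uniquely at $u^{*}=\log(d_{k+1}/d_k)$, and $\max(A,B)$ attains its continuous minimum there, with value
\[
A_k^{*}\;:=\;\frac{\log d_k}{\log(b_{k+1}d_k/d_{k+1})}.
\]

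The $\geq$ half of the lemma is then immediate: $\phi(k,m)\geq A_k^{*}$ for every admissible $m$. For the reverse direction, I would plug in the integer $m^\circ_k:=\max(1,[d_{k+1}/d_k])$, which is admissible by the same bound on $d_{k+1}/d_k$. Since $m^\circ_k\leq e^{u^{*}}$, the maximum in $\phi$ is attained on the $B$-branch; writing $\delta_k:=u^{*}-\log m^\circ_k\in[0,\log 2)$, a direct substitution yields
\[
\frac{\phi(k,m^\circ_k)}{A_k^{*}}\;=\;\frac{1+\delta_k/\log d_k}{1+\delta_k/\log(b_{k+1}d_k/d_{k+1})}.
\]

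Both denominators diverge: $\log d_k\to\infty$ because the proof of Proposition~\ref{prop:holder}(c) has already reduced us to the case $d_n\to\infty$, and $\log(b_{k+1}d_k/d_{k+1})=\log(b_{k+1}/e^{u^{*}})>\log b_k$ because $e^{u^{*}}<m_k$. Since $\delta_k$ is bounded by $\log 2$, this forces $\phi(k,m^\circ_k)=A_k^{*}(1+o(1))$, and hence $A_k^{*}\leq\min_m\phi(k,m)\leq A_k^{*}(1+o(1))$. If $\liminf_k A_k^{*}=+\infty$ both sides of the lemma equal $+\infty$; otherwise, choosing a subsequence along which $A_k^{*}$ converges to the (finite) $\liminf$ gives $\liminf_k\min_m\phi(k,m)\leq\liminf_k A_k^{*}$, which combined with the previous inequality is the claimed identity. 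The only slightly delicate step is the integer rounding when $d_{k+1}/d_k$ is close to $1$, forcing $m^\circ_k=1$ and $\delta_k$ near $\log 2$; the argument is robust here because the conclusion depends only on divergence of $\log d_k$ and $\log(b_{k+1}d_k/d_{k+1})$, not on $\delta_k$ being small.
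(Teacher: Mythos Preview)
Your proof is correct and follows essentially the same route as the paper's: reduce to $\min_m\max(X_k(m),Y_k(m))$, identify the crossing point at $d_{k+1}/d_k$, and show that the integer nearest to it realises the value $\log d_k/\log(b_{k+1}d_k/d_{k+1})$ up to a negligible error. The only organisational difference is that the paper evaluates both $Y_k([d_{k+1}/d_k])$ and $X_k([d_{k+1}/d_k]+1)$ and splits into cases according to whether $d_{k+1}\ge 2d_k$ (the sub-doubling case being handled separately, where both sides are shown to vanish via \eqref{eq:n/b_n}), whereas you pick only the left integer $m_k^\circ$ and absorb the rounding via the uniform $(1+o(1))$ multiplicative estimate, which requires you to import the standing reduction $d_n\to\infty$ from Proposition~\ref{prop:holder}(c); since the lemma is only ever invoked under that reduction, this is harmless.
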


\begin{proof}
For $r$ such that $k=k(r)$ is constant, (i.e. $r \in [1/b_{k+1}, 1/b_k)$), let
\[
X_k(m) = X_k(m(r)) = \frac{\log d_k}{\log (b_{k+1}/m)}, \qquad Y_k(m) = Y_k(m(r)) =  \frac{\log (d_{k+1}/m)}{\log (b_{k+1}/m)}
\]
for $m = m(r) \in \{1, \ldots, m_k\}$ and let
\[
M_k = \min_{m \in \{1, \ldots, m_k\}} \max(X_k(m),\, Y_k(m)).
\]
It is clear that 
\begin{equation}\label{eq:clear}
\liminf_{r\to 0^+} \max(X_k(m),\, Y_k(m)) = \liminf_{r\to 0^+} M_k.
\end{equation}
Note that $X_k$ is an increasing function of $m$, while $Y_k$ is a decreasing one (since, by Lemma~\ref{lem:d/b}, we have $d_{k+1} < b_{k+1}$). Moreover, 
\[
X_k(1) = \frac{\log d_k}{\log b_{k+1}} < \frac{\log d_{k+1}}{\log b_{k+1}} = Y_k(1)
\]
and
\[
X_k(m) \leq Y_k(m) \iff m \leq \frac{d_{k+1}}{d_k}. 
\]
By Lemma~\ref{lem:d/b} and \eqref{eq:eta}, we have
\[
1 < \frac{d_{k+1}}{d_k} < 2\eta \frac{b_{k+1}}{b_k} < \frac{b_{k+1}}{b_k} - 2 < m_k,
\]
so
\[
\left[\frac{d_{k+1}}{d_k}\right] \geq 1, \qquad 
\left[\frac{d_{k+1}}{d_k}\right] + 1 \leq m_k.
\]
Hence,
\begin{equation}\label{eq:M_k=}
M_k = \min\left(Y_k\left(\left[\frac{d_{k+1}}{d_k}\right]\right),\, X_k\left(\left[\frac{d_{k+1}}{d_k}\right]+1\right)\right).
\end{equation}

Consider the condition
\begin{equation}\label{eq:>2}
d_{k+1} \geq 2 d_k. 
\end{equation}
If \eqref{eq:>2} is not satisfied, then $[d_{k+1}/d_k] = 1$ and \eqref{eq:M_k=} gives
\begin{equation}\label{eq:Y(1)}
M_k = \min(Y_k(1), X_k(2)) = \min\left(\frac{\log d_{k+1}}{\log b_{k+1}}, \, \frac{\log d_k}{\log (b_{k+1}/2)}\right) = \frac{\log d_{k+1}}{\log b_{k+1}} + o(1)
\end{equation}
as $r \to 0^+$.
If \eqref{eq:>2} is satisfied, then 
\[
\frac{d_{k+1}}{2d_k} < \left[\frac{d_{k+1}}{d_k}\right] < \left[\frac{d_{k+1}}{d_k}\right] + 1  < \frac{2d_{k+1}}{d_k},
\]
so
\begin{equation}\label{eq:Y}
\frac{\log d_k}{\log (b_{k+1}d_k/d_{k+1})} \leq  Y_k\left(\left[\frac{d_{k+1}}{d_k}\right]\right) < \frac{\log (2d_k)}{\log (2b_{k+1}d_k/d_{k+1})} 
\end{equation}
and
\begin{equation}\label{eq:X}
\frac{\log d_k}{\log (b_{k+1}d_k/d_{k+1})} < X_k\left(\left[\frac{d_{k+1}}{d_k}\right]+1\right) <
\frac{\log d_k}{\log (b_{k+1}d_k/(2d_{k+1}))}. 
\end{equation}

Suppose first that \eqref{eq:>2} does not hold for almost all $k$. Then for all $k$ we have $\log d_k \leq k \log 2 + c$ for some constant $c > 0$, so by \eqref{eq:n/b_n}, both sides of the equation in the lemma are equal to $0$. Hence, we can assume that \eqref{eq:>2} holds for infinitely many $k$. If it holds for almost all $k$, then \eqref{eq:M_k=}, \eqref{eq:Y} and \eqref{eq:X} give
\[
\liminf_{r\to 0^+} M_k = \liminf_{r\to 0^+}\frac{\log d_k}{\log (b_{k+1}d_k/d_{k+1})} = \liminf_{n\to\infty}\frac{\log d_n}{\log (b_{n+1}d_n/d_{n+1})},
\]
which (together with \eqref{eq:clear}) ends the proof in this case. Otherwise, \eqref{eq:>2} does not hold for infinitely many $k$, and then \eqref{eq:M_k=}, \eqref{eq:Y(1)}, \eqref{eq:Y} and \eqref{eq:X} give
\[
\liminf_{r\to 0^+} M_k = \min \left(\liminf_{r\to 0^+}\frac{\log d_k}{\log b_k}, \, \liminf_{r\to 0^+}\frac{\log d_k}{\log (b_{k+1}d_k/d_{k+1})}\right).
\]
By Lemma~\ref{lem:d/b}, 
\[
\frac{\log d_k}{\log b_k} > \frac{\log d_k}{\log (b_{k+1}d_k/d_{k+1})},
\]
so 
\[
\liminf_{r\to 0^+} \frac{\log d_k}{\log b_k} \geq 
\liminf_{r\to 0^+}\frac{\log d_k}{\log (b_{k+1}d_k/d_{k+1})}.
\]
Hence, 
\[
\liminf_{r\to 0^+} M_k = \liminf_{r\to 0^+}\frac{\log d_k}{\log (b_{k+1}d_k/d_{k+1})}= \liminf_{n\to\infty}\frac{\log d_n}{\log (b_{n+1}d_n/d_{n+1})},
\]
which (together with \eqref{eq:clear}) ends the proof.
\end{proof}

Note that in this section, instead of the assumption $a_{n+1}/a_n \to 0$, we used a weaker condition \eqref{eq:eta}. This proves Remark~\ref{rem:a_n}. 

\section{Hausdorff dimension}\label{sec:HD}

Let $\II$ be the Cantor set defined as
\[
\II = \bigcap_{n=0}^\infty \bigcup_{j \in \mathcal J_n} I_{n,j}.
\]
It is obvious by construction that $\II$ has Lebesgue measure $0$ (cf. Remark~\ref{rem:cantor}). Let $\mu$ be the probabilistic Borel measure in $\R$ supported on $\II$, such that for every $I_{n+1,j'} \subset I_{n,j}$ with $j \in \mathcal J_n$, $j' \in \mathcal J_{n+1}$, we have
\[
\mu(I_{n+1,j'}) = \frac{\mu(I_{n,j})}{\card \{j''\in \mathcal J_{n+1}: I_{n+1, j''} \subset I_{n,j}\}}
\]
(the construction of such a measure is standard). By Lemma~\ref{lem:J_n}, we have
\begin{equation}\label{eq:mu(I)}
\mu(I_{n,j}) < \frac{1}{q^n b_n}
\end{equation}
for every $n\in\N$, $j \in \mathcal J_n$. Let $\nu$ be a probabilistic Borel measure in $\R^2$ supported on $\graph f|_\II$ defined by 
\[
\nu = (\id_{\R}, f)_*\mu.
\]
By definition, we have
\[
\nu(U) = \mu(\{x\in\R: (x,f(x)) \in U\})
\]
for every Borel set $U \subset \R^2$.

Recall that by Proposition~\ref{prop:holder}~(c), we assume that $d_n \to \infty$ as $n \to \infty$, in particular $d_n > 1$ and $\log^+ d_n = \log d_n$.

For $t\in \II$ and a small $r > 0$, let $Q_r(t)$ be the square with horizontal and vertical sides of length $r$ centred at $(t,f(t))\in \graph f$.

\begin{prop}\label{prop:HD}
For every $t \in \II$, we have
\[
\liminf_{r\to 0^+} \frac{\log \nu(Q_r(t))}{\log r} \geq 1 + \liminf_{n\to\infty}\frac{\log d_n}{\log (b_n d_n / d_{n+1})}.
\]
\end{prop}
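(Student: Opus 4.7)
The plan is to estimate $\nu(Q_r(t)) = \mu(\Pi_r(t))$ from above for small $r > 0$, where
\[
\Pi_r(t) := \{x \in \II : |x - t| \leq r/2,\ |f(x) - f(t)| \leq r/2\},
\]
and then pass to $\log$ and $\liminf$ as $r \to 0^+$. I decompose $\Pi_r(t) \setminus \{t\}$ by \emph{separation level}: for $x \in \Pi_r(t) \setminus \{t\}$, let $m(x) \geq 0$ be the largest integer such that $x$ and $t$ lie in the same generation-$m$ interval, and let $I_m^t$ denote this interval. Write $\Pi_r(t) \setminus \{t\} = \bigsqcup_{m \geq 0} \Pi_r^m(t)$ with $\Pi_r^m(t) := \{x : m(x) = m\}$. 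Applying Lemma~\ref{lem:||>} at level $m$ to $x, t \in I_m^t$ gives $|x - t| \leq R_m := (r/2 + c_2 a_{m+1})/(c_1 d_m)$. Since $x \in I_m^t \setminus I_{m+1}^t$, the point $x$ lies in one of at most $N_m \leq 2 R_m b_{m+1} + 2$ generation-$(m+1)$ subintervals of $I_m^t$ meeting $[t - R_m, t + R_m]$, and by Lemma~\ref{lem:J_n} each such interval has $\mu$-measure $< 1/(q^{m+1} b_{m+1})$, yielding $\mu(\Pi_r^m(t)) \leq N_m/(q^{m+1} b_{m+1})$.

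Next I pick a scale $n = n(r) \to \infty$ adapted to $r$, taking the unique $n$ with $d_{n+1}/b_{n+1} \leq r < d_n/b_n$ (well-defined for small $r$ by the strict monotonicity of $d_n/b_n \to 0$ from Lemma~\ref{lem:d/b}). Iterating Lemma~\ref{lem:d/b} together with \eqref{eq:eta} shows that the summand $N_m/(q^{m+1} b_{m+1})$ decays geometrically in $|m - n|$ (using both the $2\eta$-contraction of $d_m/b_m$ and the super-exponential growth of $b_m$), so $\sum_m \mu(\Pi_r^m(t))$ is dominated by its value near $m = n$. Substituting $r < d_n/b_n$ and $a_{n+1} b_{n+1} \leq d_{n+1}$ to simplify $R_n$ and collecting terms yields an estimate of the form
\[
\nu(Q_r(t)) \leq \frac{C}{q^{n+1}} \cdot \frac{d_{n+1}}{b_n \, d_n^2}.
\]
Taking logs, absorbing $(n+1)\log q$ into an $o(\log r)$ correction via \eqref{eq:n/b_n}, and noting that $-\log r \sim \log(b_n d_n/d_{n+1})$ for $r$ suitably placed in the chosen range, one obtains
\[
\frac{\log \nu(Q_r(t))}{\log r} \geq 1 + \frac{\log d_n}{\log(b_n d_n/d_{n+1})} + o(1),
\]
and taking $\liminf$ over $r \to 0^+$ (equivalently $n \to \infty$) gives the claim.

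The main obstacle is the sharp control needed in the second step: showing that $\sum_m \mu(\Pi_r^m(t))$ is dominated by the $m = n$ term with constants producing the correct denominator $\log(b_n d_n/d_{n+1})$, rather than a weaker quantity (e.g. $\log(b_{n+1}/d_{n+1})$, which a crude bound would give). This amounts to coordinating the horizontal constraint $|x - t| \leq r/2$ with the vertical constraint from Lemma~\ref{lem:||>} at and near level $n$: within each generation-$n$ interval the vertical constraint restricts $x$ to a set of length $\lesssim r/d_n$, and one must exploit this geometry together with the uniform $\mu$-measure bounds on generation-$(n+1)$ subintervals to extract the sharp denominator.
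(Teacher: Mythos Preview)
Your decomposition by separation level is a natural idea, but the argument has a genuine gap at exactly the point you flag as the ``main obstacle,'' and the gap is not merely one of bookkeeping.

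First, the geometric-decay claim for $\sum_m \mu(\Pi_r^m(t))$ is not correct as stated. For small $m$ the vertical bound $R_m = (r/2 + c_2 a_{m+1})/(c_1 d_m)$ is essentially a constant (dominated by the $a_{m+1}$-term), so your bound $N_m/(q^{m+1}b_{m+1})$ does not decay with $r$ at all. To kill these terms one must invoke the \emph{horizontal} constraint $|x-t|\le r/2$, which forces $\Pi_r^m(t)=\emptyset$ once the gaps at generation $m{+}1$ exceed $r$; you never use this. Even after this fix the sum is effectively supported on levels $m\ge k$ (with $1/b_{k+1}\le r<1/b_k$), and one still has to track several regimes ($m$ below/above the index $l$ with $a_{l+1}\le r<a_l$, and $m$ below/above the first index where $d_m/b_{m+1}\le r$), not a single pivot $m=n$.

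Second, and more seriously, your single scale $n=n(r)$ with $d_{n+1}/b_{n+1}\le r<d_n/b_n$ cannot yield the sharp denominator. Your displayed bound $\nu(Q_r(t))\le C q^{-(n+1)} d_{n+1}/(b_n d_n^2)$ does not depend on $r$ within the range, so the worst ratio $\log\nu/\log r$ occurs at the upper endpoint $r\approx d_n/b_n$, where $-\log r\approx \log(b_n/d_n)$, not $\log(b_n d_n/d_{n+1})$. Concretely, if $b_{n+1}>b_n d_n^2$ (which is allowed) your bound gives exponent below $1$ at that endpoint, while the target exceeds $1$. The phrase ``for $r$ suitably placed'' hides the fact that the inequality must hold for \emph{every} small $r$, not a well-chosen subsequence.

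The paper resolves this differently. It does not sum over levels; instead it fixes the horizontal scales $k,m$ via $m/b_{k+1}\le r<(m{+}1)/b_{k+1}$ and produces \emph{two} upper bounds for $\nu(Q_r(t))$ (one starting from $\card\MM_k\le 2$, one from $\card\MM_{k+1}\le m{+}2$), by telescoping the ratios $\card\MM_{n+1}/\card\MM_n$ (Lemma~\ref{lem:M/M}) up to a stopping time $s$ where $d_s/b_{s+1}\le r$. This gives
\[
\frac{\log\nu(Q_r(t))}{\log r}\ \ge\ 1+\max\Bigl(\frac{\log d_k}{\log(b_{k+1}/m)},\ \frac{\log(d_{k+1}/m)}{\log(b_{k+1}/m)}\Bigr)-o(1),
\]
and Lemma~\ref{lem:liminf} (a purely combinatorial minimax computation) identifies the $\liminf$ of this maximum over all $r$ with $\liminf_n \log d_n/\log(b_{n+1}d_n/d_{n+1})$. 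The need for two competing bounds and the minimax reduction is precisely what your single-scale estimate misses.
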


Before proving Proposition~\ref{prop:HD}, we state two lemmas. 
As previously, take $k = k(r), m  = m(r) \in \N$ such that \eqref{eq:k,m} is satisfied.
Let $l = l(r) \in \N$ be such that
\begin{equation}\label{eq:l}
a_{l+1} \leq r < a_l.
\end{equation}

\begin{lem}\label{lem:l/n}
We have
\[
\lim_{r\to 0^+} \frac{l(r)}{-\log r} = 0.
\]
\end{lem}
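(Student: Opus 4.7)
The plan is to reduce the claim to the statement $n/(-\log a_n) \to 0$, which will follow from the Stolz--Ces\'aro theorem in exactly the same manner as \eqref{eq:n/b_n}. First, from the right-hand inequality in \eqref{eq:l}, namely $r < a_{l(r)}$, together with the fact that $a_n \to 0$ (an immediate consequence of $a_{n+1}/a_n \to 0$), for all sufficiently small $r$ we have $a_{l(r)} < 1$ and hence
\[
-\log r > -\log a_{l(r)} > 0.
\]
Therefore
\[
0 \leq \frac{l(r)}{-\log r} < \frac{l(r)}{-\log a_{l(r)}}.
\]
Since $l(r) \to \infty$ as $r \to 0^+$, it suffices to show that $\displaystyle\lim_{n\to\infty} n/(-\log a_n) = 0$.

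To this end, I would set $y_n = -\log a_n$ and verify the two hypotheses of Stolz--Ces\'aro. The assumption $a_{n+1}/a_n \to 0$ forces $a_{n+1} < a_n$ for large $n$, so $y_n$ is eventually strictly increasing; the same assumption forces $a_n \to 0$, so $y_n \to +\infty$. Stolz--Ces\'aro then gives
\[
\lim_{n\to\infty} \frac{n}{-\log a_n} \;=\; \lim_{n\to\infty} \frac{1}{y_{n+1}-y_n} \;=\; \lim_{n\to\infty} \frac{1}{\log(a_n/a_{n+1})} \;=\; 0,
\]
the last equality because $a_n/a_{n+1} \to \infty$. Combined with the squeezing inequality above this yields $l(r)/(-\log r) \to 0$, which is precisely the assertion.

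There is no serious obstacle in this argument; it is essentially a routine repackaging of the same Stolz--Ces\'aro computation the authors already invoked to obtain \eqref{eq:n/b_n}, applied to the sequence $a_n$ in place of $1/b_n$. The only minor point to keep track of is the range of $r$ for which $-\log a_{l(r)}$ is positive, so that the squeezing inequality is meaningful; this is automatic once $r$ is small enough, because $a_n \to 0$.
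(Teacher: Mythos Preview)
Your argument is correct. It is also genuinely simpler than the paper's proof. The paper splits into the cases $l \leq k$ and $l > k$, using the already established fact \eqref{eq:n/b_n} in the first case and, in the second, a telescoping product $a_l = (a_l/a_{l-1})\cdots(a_{k+1}/a_k)\,a_k \leq A_k^{-(l-k)}$ with $A_k = \inf_{i\geq k} a_i/a_{i+1} \to \infty$ to bound $(l-k)/(-\log r)$. Your approach bypasses $k$ and the $b_n$ entirely: you simply squeeze $l(r)/(-\log r)$ by $l(r)/(-\log a_{l(r)})$ and then apply Stolz--Ces\`aro directly to the sequence $-\log a_n$, which is the exact analogue of the computation already used for \eqref{eq:n/b_n}. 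This is shorter, and it makes transparent that the lemma depends only on the hypothesis $a_{n+1}/a_n \to 0$ and not on any interplay between the $a_n$ and $b_n$.
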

\begin{proof} If $l \leq k$, then by \eqref{eq:n/b_n} and \eqref{eq:k,m}, we have $l/(-\log r) = o(1)$ as $r\to 0^+$. Suppose now $l > k$. Let
\[
A_n = \inf_{i\geq n} \frac{a_i}{a_{i+1}}.
\]
Since $a_{n+1}/a_n \to 0$, we have $A_{n+1} \geq A_n$ and $A_n \to \infty$.
By definition, 
\[
r < a_l = \frac{a_l}{a_{l-1}} \cdots \frac{a_{k+1}}{a_k} a_k \leq A_k^{-(l-k)} a_k < A_k^{-(l-k)}.
\]
This together with \eqref{eq:n/b_n} implies
\[
0 < \frac{l}{-\log r} = 
\frac{l-k}{-\log r} +
\frac{k}{-\log r} < \frac{1}{\log A_k} + \frac{k}{\log b_k}\to 0 
\]
as $r \to 0^+$. 
\end{proof}

Let
\[
\MM_n = \{j \in \mathcal J_n: \graph f|_{I_{n,j}} \cap Q_r(t) \neq \emptyset\}
\]
for $n \in \N$. Since the intervals of $n$-th generation are separated by gaps of length at least $(1-|I|)/b_n$, by \eqref{eq:k,m} we have
\begin{equation}\label{eq:M_k}
\card\MM_k \leq 2, \qquad \card\MM_{k+1} \leq m + 2.
\end{equation}

\begin{lem}\label{lem:M/M}
There exists a constant $c_3 > 0$ such that for every $t \in \II$, $r > 0$ and $n\in \N$, we have $\card\MM_n \neq 0$ and
\[
\frac{\card\MM_{n+1}}{\card\MM_n} \leq 
\begin{cases}
c_3d_{n+1}/d_n & \text{if } n < l,\\
c_3 & \text{if } n \geq l \text{ and } d_n/b_{n+1} > r,\\
c_3r b_{n+1}/d_n & \text{if } n \geq l \text{ and } d_n/b_{n+1} \leq r.
\end{cases}
\]
\end{lem}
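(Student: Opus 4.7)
The plan is to count, for each $j \in \MM_n$, how many subintervals $I_{n+1,j'} \subset I_{n,j}$ can belong to $\MM_{n+1}$, and then to split the resulting estimate into the three announced cases according to the relative sizes of $r$, $a_{n+1}$ and $d_n/b_{n+1}$. Non-emptiness $\card \MM_n \neq 0$ is immediate: since $t \in \II$, there is $j_0 \in \mathcal J_n$ with $t \in I_{n,j_0}$, and $(t,f(t)) \in Q_r(t)$ shows $j_0 \in \MM_n$.

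The key geometric step uses Lemma~\ref{lem:||>} to extract near-monotonicity on $I_{n,j}$. For each $j' \in \MM_{n+1}$ with $I_{n+1,j'} \subset I_{n,j}$ choose a witness $x_{j'} \in I_{n+1,j'}$ with $(x_{j'},f(x_{j'})) \in Q_r(t)$; any two such witnesses then satisfy $|f(x_{j'})-f(x_{j''})| \leq r$. Applying Lemma~\ref{lem:||>} to $x_{j'},x_{j''} \in I_{n,j}$ gives
\[
c_1 d_n |x_{j'}-x_{j''}| \leq |f(x_{j'})-f(x_{j''})| + c_2 a_{n+1} \leq r + c_2 a_{n+1},
\]
so the witnesses all lie in a single subinterval of length at most
\[
L := \frac{r + c_2 a_{n+1}}{c_1 d_n}.
\]
Since the $I_{n+1,j'}$ are disjoint intervals of length $|I|/b_{n+1}$ with consecutive left endpoints exactly $1/b_{n+1}$ apart, at most $L b_{n+1} + 2$ of them meet this subinterval. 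Summing over $j \in \MM_n$ produces the master estimate
\[
\frac{\card \MM_{n+1}}{\card \MM_n} \leq L b_{n+1} + 2 = \frac{(r + c_2 a_{n+1})b_{n+1}}{c_1 d_n} + 2.
\]

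The three cases then follow by routine bookkeeping, using \eqref{eq:l} together with the monotonicity of $a_n$ guaranteed by \eqref{eq:eta}. If $n<l$, then $a_{n+1}>r$, so $L b_{n+1} \leq (1+c_2)a_{n+1}b_{n+1}/(c_1 d_n) = (1+c_2)(d_{n+1}/d_n - 1)/c_1$; combined with $d_{n+1}/d_n \geq 1$ this absorbs both terms into a constant multiple of $d_{n+1}/d_n$. If $n \geq l$, then $a_{n+1} \leq r$, so $L b_{n+1} \leq (1+c_2)rb_{n+1}/(c_1 d_n)$; this is bounded by a constant when $d_n/b_{n+1}>r$, while when $d_n/b_{n+1} \leq r$ the inequality $rb_{n+1}/d_n \geq 1$ absorbs the additive $2$ into a constant multiple of $rb_{n+1}/d_n$. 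A single $c_3$ depending only on $c_1$ and $c_2$ covers all three cases.

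The main obstacle is the second paragraph: the purely horizontal count $rb_{n+1}+2$ of $(n{+}1)$-th generation subintervals meeting the projection of $Q_r(t)$ is far too weak in Cases~1 and~2. The proof instead leverages Lemma~\ref{lem:||>}, which — despite tolerating an $a_{n+1}$-sized error, and hence not yielding literal monotonicity of $f$ on $I_{n,j}$ — still forces every point of $\graph f|_{I_{n,j}}\cap Q_r(t)$ to project into a horizontal strip of length only $L$. Once this observation is in place, the subsequent case analysis is a matter of algebra driven entirely by the definitions of $l = l(r)$ and $d_n$.
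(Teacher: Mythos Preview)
Your proof is correct and follows essentially the same approach as the paper: both bound $\card\MM_{n+1}(j)$ by using Lemma~\ref{lem:||>} to confine the horizontal projection of $\graph f|_{I_{n,j}}\cap Q_r(t)$ to an interval of length $\leq (r+c_2a_{n+1})/(c_1 d_n)$, then count $(n{+}1)$-th generation subintervals meeting it, and finally split into cases via \eqref{eq:l}. Your handling of the case $n<l$ via the identity $a_{n+1}b_{n+1}/d_n = d_{n+1}/d_n - 1$ is in fact slightly cleaner than the paper's sub-case split on whether $a_{n+1}b_{n+1}\lessgtr d_n$, but the substance is identical.
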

\begin{proof} Since $t \in \II$, we have $\card\MM_n \neq 0$ for every $n \in \N$. Moreover,  
\begin{equation}\label{eq:M(j)}
\frac{\card\MM_{n+1}}{\card\MM_n} \leq \max_{j\in \mathcal J_n}\card \MM_{n+1}(j),
\end{equation}
where
\[
\MM_{n+1}(j) = \{j' \in \MM_{n+1}: I_{n+1,j'} \subset I_{n,j}\}.
\]
Take $j \in \mathcal J_n$ and let
\begin{align*}
x &= \max\{u \in I_{n,j}: (u, f(u)) \in Q_r(t)\},\\
y &= \min\{u \in I_{n,j}: (u, f(u)) \in Q_r(t)\}.
\end{align*}
By definition, the interval $[y,x]$ intersects all the intervals $I_{n+1,j'}\subset I_{n,j}$ with $j' \in \MM_{n+1}$. Since the intervals of $(n+1)$-th generation have length $|I|/b_{n+1}$ and are separated by gaps of length at least $(1-|I|)/b_{n+1}$, this implies 
\begin{equation}\label{eq:card<}
\card \MM_{n+1}(j) \leq (x-y)b_{n+1}+ 1 + |I|.
\end{equation}

By Lemma~\ref{lem:||>},
\[
r \geq |f(x)-f(y)| \geq c_1 d_n(x-y) - c_2a_{n+1},
\]
which implies
\[
x-y \leq \frac{r+c_2a_{n+1}}{c_1d_n} < c\frac{r+a_{n+1}}{d_n}
\]
for a suitable constant $c > 0$, so \eqref{eq:card<} gives
\begin{equation}\label{eq:M_n+1}
\card \MM_{n+1}(j) \leq c\frac{(r+a_{n+1})b_{n+1}}{d_n}+ 1 + |I|.
\end{equation}

Suppose $n < l$ for $l$ from \eqref{eq:l}. Then $a_{n+1} > r$, so \eqref{eq:M_n+1} gives
\begin{equation}\label{eq:preCase1}
\card \MM_{n+1}(j) < 2c \frac{a_{n+1}b_{n+1}}{d_n}+ 1 + |I|.
\end{equation}
Let $c_3 = 2c  + 1 + |I|$. If $a_{n+1} b_{n+1} < d_n$, then by \eqref{eq:preCase1},
\begin{equation}\label{eq:case1a}
\card \MM_{n+1}(j) \leq c_3 < c_3\frac{d_{n+1}}{d_n}.
\end{equation}
Otherwise, if $a_{n+1} b_{n+1} \geq d_n$, then by \eqref{eq:preCase1},
\begin{equation}\label{eq:case1b}
\card \MM_{n+1}(j) \leq c_3 \frac{a_{n+1}b_{n+1}}{d_n} < c_3 \frac{d_{n+1}}{d_n}.
\end{equation}

Suppose now $n \geq l$. Then $a_{n+1} \leq r$, so \eqref{eq:M_n+1} gives
\begin{equation}\label{eq:preCase2}
\card \MM_{n+1}(j) \leq 2 c \frac{rb_{n+1}}{d_n}+ 1 + |I|.
\end{equation}
If $d_n/b_{n+1} > r$, then by \eqref{eq:preCase2},
\begin{equation}\label{eq:case2}
\card \MM_{n+1}(j) < c_3.
\end{equation}
Otherwise, if $d_n/b_{n+1} \leq r$, then by \eqref{eq:preCase2},
\begin{equation}\label{eq:case3}
\card \MM_{n+1}(j) \leq c_3\frac{rb_{n+1}}{d_n}.
\end{equation}
Using \eqref{eq:M(j)}, \eqref{eq:case1a}, \eqref{eq:case1b}, \eqref{eq:case2} and \eqref{eq:case3}, we end the proof of the lemma.
\end{proof}

\begin{proof}[Proof of Proposition~\rm\ref{prop:HD}]
Consider $k = k(r)$, $m = m(r)$, $l = l(r)$ from \eqref{eq:k,m} and \eqref{eq:l}. In view of Lemma~\ref{lem:liminf}, it is sufficient to show
\begin{equation}\label{eq:nu>}
\frac{\log \nu(Q_r(t))}{\log r} \geq 1 +
\max\left(\frac{\log d_k}{\log (b_{k+1}/m)} - o(1),\, \frac{\log (d_{k+1}/m)}{\log (b_{k+1}/m)} - o(1)\right)
\end{equation}
as $r \to 0^+$. First we show that
\begin{equation}\label{eq:>I}
\frac{\log \nu(Q_r(t))}{\log r} \geq 1 + \frac{\log d_k}{\log (b_{k+1}/m)} - o(1)
\end{equation}
as $r \to 0^+$. Let
\[
l_1 = \max\{k,\, l\}, \qquad s_1 = \min\{n \geq l_1: d_n/b_{n+1} \leq r\}.
\]
By Lemma~\ref{lem:d/b}, the number $s_1$ is well-defined. By \eqref{eq:M_k} and Lemma~\ref{lem:M/M}, we have
\[
0 < \card\MM_{s_1} \leq 2 c_3^{s_1- k} \frac{d_{l_1}}{d_k}
\]
and
\[
0 < \card\MM_{s_1 + 1} \leq 2 c_3^{s_1 - k+1} \frac{d_{l_1}rb_{s_1+1}}{d_kd_{s_1}}\leq 2 c_3^{s_1 - k+1} \frac{r b_{s_1+1}}{d_k},
\]
so by \eqref{eq:mu(I)} and Lemma~\ref{lem:d/b}, 
\begin{equation}\label{eq:nu<I}
0 < \nu(Q_r(t)) \leq \mu\left(\bigcup_{j \in \MM_{s_1}} I_{s_1, j}\right) < \frac{\card\MM_{s_1}}{q^{s_1} b_{s_1}} \leq 
\frac{2 c_3^{s_1- k}}{q^{s_1}} \frac{d_{l_1}}{d_kb_{s_1}} < C^{s_1} \frac{b_{l_1}}{b_{s_1}} \leq \frac{C^{s_1}}{B_{l_1}^{s_1-l_1}}
\end{equation}
and
\begin{equation}\label{eq:nu<II}
0 < \nu(Q_r(t)) \leq \mu\left(\bigcup_{j \in \MM_{s_1+1}} I_{s_1+1, j}\right) < \frac{\card\MM_{s_1+1}}{q^{s_1+1} b_{s_1+1}}\leq \frac{2 c_3^{s_1 - k+1}}{q^{s_1+1}} \frac{r}{d_k} < C^{s_1}\frac{r}{d_k},
\end{equation}
where
\[
B_n = \inf_{i\geq n}\frac{b_{i+1}}{b_i}
\]
and $C > 0$ is a suitable constant. (Since $b_{n+1}/b_n \to \infty$, we have $B_{n+1} \geq B_n$ and $B_n \to \infty$.) If
\[
\frac{C^{s_1}}{B_{l_1}^{s_1-l_1}} \leq r^2,
\]
then by \eqref{eq:nu<I}, Lemma~\ref{lem:d/b} and \eqref{eq:k,m}, 
\[
\frac{\log \nu(Q_r(t))}{\log r} \geq 2 > 1 + \frac{\log d_k}{\log b_k} > 1 + \frac{\log d_k}{\log (b_{k+1}/m)},
\]
so \eqref{eq:>I} is satisfied. Hence, we can assume
\[
\frac{C^{s_1}}{B_{l_1}^{s_1-l_1}} > r^2,
\]
which implies
\begin{equation}\label{eq:B<}
(s_1 - l_1)\log B_{l_1} - s_1 \log C < -2 \log r.
\end{equation}
By \eqref{eq:k,m} and \eqref{eq:nu<II},
\[
\frac{\log \nu(Q_r(t))}{\log r} \geq  1 + \frac{\log d_k}{\log (b_{k+1}/m)} - \frac{s_1 \log C}{-\log r}.
\]
Now, if $s_1 \leq 2l_1$, then by \eqref{eq:n/b_n} and Lemma~\ref{lem:l/n}, we have \eqref{eq:>I}. Otherwise, if $s_1 > 2l_1$, then by \eqref{eq:B<}, 
\[
\frac{s_1\log C}{-\log r} < \frac{4\log C}{\log B_{l_1} - 2\log C} \to 0
\]
as $r \to 0^+$, so \eqref{eq:>I} holds.

Now we show that
\begin{equation}\label{eq:>II}
\frac{\log \nu(Q_r(t))}{\log r} \geq 1 + \frac{\log (d_{k+1}/m)}{\log (b_{k+1}/m)} - o(1)
\end{equation}
as $r \to 0^+$. The proof is analogous to the proof of \eqref{eq:>I}. Let
\[
l_2 = \max\{k+1, \, l\}, \qquad s_2 = \min\left\{n \geq l_2: d_n/b_{n+1} \leq r\right\}.
\]
By Lemma~\ref{lem:d/b}, the number $s_2$ is well-defined. By \eqref{eq:M_k} and Lemma~\ref{lem:M/M}, we have
\[
\card\MM_{s_2} \leq c_3^{s_2- k-1} (m+2)\frac{d_{l_2}}{d_{k+1}}
\]
and
\[
\card\MM_{s_2 + 1} \leq c_3^{s_2 - k}(m+2) \frac{d_{l_2}rb_{s_2+1}}{d_{k+1}d_{s_2}}\leq c_3^{s_2 - k}(m+2) \frac{r b_{s_2+1}}{d_{k+1}},
\]
so by \eqref{eq:mu(I)} and Lemma~\ref{lem:d/b}, 
\begin{equation}\label{eq:nu<I'}
\nu(Q_r(t)) \leq \mu\left(\bigcup_{j \in \MM_{s_2}} I_{s_2, j}\right) < \frac{\card\MM_{s_2}}{q^{s_2} b_{s_2}}\leq \frac{c_3^{s_2- k-1}(m+2)}{q^{s_2}} \frac{d_{l_2}}{d_{k+1} b_{s_2}} < C^{s_2} \frac{mb_{l_2}}{b_{s_2}} \leq C^{s_2}\frac{m}{B_{l_2}^{s_2-l_2}}
\end{equation}
and
\begin{equation}\label{eq:nu<II'}
\nu(Q_r(t)) \leq \mu\left(\bigcup_{j \in \MM_{s_2+1}} I_{s_2+1, j}\right) < \frac{\card\MM_{s_2+1}}{q^{s_2+1} b_{s_2+1}}\leq\frac{c_3^{s_2 - k} (m+2)}{q^{s_2+1}} \frac{r}{d_{k+1}} < C^{s_2}\frac{m r}{d_{k+1}},
\end{equation}
for a suitable constant $C > 0$. If
\[
\frac{C^{s_2}}{B_{l_2}^{s_2-l_2}} \leq r^2,
\]
then by \eqref{eq:nu<I'}, Lemma~\ref{lem:d/b} and \eqref{eq:k,m}, 
\[
\frac{\log \nu(Q_r(t))}{\log r} \geq 2 > 1 + \frac{\log (d_{k+1}/m)}{\log (b_{k+1}/m)},
\]
so \eqref{eq:>II} holds. Hence, we can assume
\[
\frac{C^{s_2}}{B_{l_2}^{s_2-l_2}} > r^2,
\]
which implies
\begin{equation}\label{eq:B<'}
(s_2 - l_2)\log B_{l_2} - s_2 \log C < -2 \log r.
\end{equation}
By \eqref{eq:k,m} and \eqref{eq:nu<II'},
\[
\frac{\log \nu(Q_r(t))}{\log r} \geq  1 + \frac{\log (d_{k+1}/m)}{\log (b_{k+1}/m)} - \frac{s_2 \log C}{-\log r}.
\]
If $s_2 \leq 2 l_2$, then by \eqref{eq:n/b_n} and Lemma~\ref{lem:l/n}, we have \eqref{eq:>II}. Otherwise, if $s_2 > 2l_2$, then by \eqref{eq:B<'}, 
\[
\frac{s_2\log C}{-\log r} < \frac{4\log C}{\log B_{l_2} - 2\log C} \to 0
\]
as $r \to 0^+$, so \eqref{eq:>II} is satisfied.

In this way we showed \eqref{eq:>I} and \eqref{eq:>II}, which implies \eqref{eq:nu>} and ends the proof of the proposition.
\end{proof}

Proposition~\ref{prop:HD} together with Proposition~\ref{prop:holder}~(c) and Lemma~\ref{lem:frostman} give immediately the following corollary.

\begin{cor}\label{cor:HD}
\[
\dim_H(\graph f|_{\II}) \geq  1 + \liminf_{n\to\infty} \frac{\log^+ d_n}{\log (b_{n+1}d_n/d_{n+1})}.
\]
\qed
\end{cor}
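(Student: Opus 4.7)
The plan is to package Proposition~\ref{prop:HD} through the mass-distribution principle of Lemma~\ref{lem:frostman}. The measure $\nu$ is a probability measure supported on $\graph f|_{\II}$, so any lower bound of the form $\dim_H(\nu)\ge D$ immediately gives $\dim_H(\graph f|_{\II})\ge D$, because $\graph f|_{\II}$ carries the full $\nu$-mass. Thus all the work is pointwise, at each base point $t\in\II$.

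In the main case $d_n\to\infty$ (which is the standing assumption of Section~\ref{sec:HD} and in particular makes $\log^+ d_n=\log d_n$ for large $n$), Proposition~\ref{prop:HD} states that at every $t\in\II$ the local-dimension lower bound
\[
\liminf_{r\to 0^+}\frac{\log \nu(Q_r(t))}{\log r}\ \ge\ 1+\liminf_{n\to\infty}\frac{\log^+ d_n}{\log(b_{n+1}d_n/d_{n+1})}
\]
holds, where $Q_r(t)$ is the square of side $r$ centred at $(t,f(t))$. Since $\supp\nu\subset\graph f|_{\II}$, this bound holds at $\nu$-a.e. point of $\R^2$; applying Lemma~\ref{lem:frostman} with the explicitly allowed square-neighbourhood variant, one concludes $\dim_H(\nu)\ge D$ and hence $\dim_H(\graph f|_{\II})\ge D$ with $D$ the right-hand side of the corollary.

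The remaining case, in which the sequence $d_n$ is bounded, is where Proposition~\ref{prop:holder}(c) is used: then $f$ is Lipschitz, and the right-hand side of the corollary reduces to $1$, since $\log^+ d_n$ is bounded while $\log(b_{n+1}d_n/d_{n+1})\to\infty$ by Lemma~\ref{lem:d/b} together with $b_{n+1}/b_n\to\infty$. It remains to check $\dim_H(\graph f|_{\II})\ge 1$ in this sub-case, which comes from a direct Frostman estimate on the measure $\mu$: the bound $\mu(I_{n,j})<1/(q^n b_n)$ from \eqref{eq:mu(I)}, the length $|I_{n,j}|=|I|/b_n$, and \eqref{eq:n/b_n} give $\liminf_{r\to 0^+}\log\mu(B_r(x))/\log r\ge 1$ for every $x\in\II$, hence $\dim_H(\II)\ge 1$; since $f$ is Lipschitz, $\dim_H(\graph f|_{\II})\ge\dim_H(\II)\ge 1$. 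No new obstacle arises here: the delicate covering-counting estimates have all been absorbed into Proposition~\ref{prop:HD}, and the corollary is merely the standard translation from pointwise local dimension to Hausdorff dimension of the support.
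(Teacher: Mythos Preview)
Your proof is correct and follows the same approach as the paper, which simply states that Proposition~\ref{prop:HD}, Proposition~\ref{prop:holder}(c), and Lemma~\ref{lem:frostman} together give the result. You have faithfully expanded this one-line argument: in the main case $d_n\to\infty$ you combine the pointwise local-dimension estimate of Proposition~\ref{prop:HD} with the Frostman lemma exactly as intended, and in the bounded case you supply an explicit verification that $\dim_H(\II)\ge 1$ (via the measure bound \eqref{eq:mu(I)} and \eqref{eq:n/b_n}) which the paper leaves implicit, since for the purposes of Theorem~\ref{thm:main} only $\dim_H(\graph f)\ge 1$ is needed and that is immediate from Lemma~\ref{lem:holder}.
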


Propositions~\ref{prop:upperBD}--\ref{prop:lowerBD} and Corollary~\ref{cor:HD} end the proof of Theorem~\ref{thm:main}.

\end{document}